\setlist[itemize]{topsep=0.2em, itemsep=0.2em, leftmargin=2em}
\setlist[enumerate]{topsep=0.2em, itemsep=0.2em, leftmargin=2em}
\setlist[description]{topsep=0.2em, itemsep=0.2em, leftmargin=2em}
\newcommand{\R}{\mathbb{R}}
\newcommand{\C}{\mathbb{C}}
\newcommand{\df}{\mathrm{d}}
\newcommand{\M}{\mathbb{M}}
\DeclareMathOperator{\arctanh}{arctanh}
\DeclareMathOperator{\sech}{sech}
\DeclareMathOperator{\am}{am}
\DeclareMathOperator{\Rot}{Rot}
\DeclareMathOperator{\Area}{Area}
\DeclareMathOperator{\Vol}{Vol}
\newcommand{\E}{\mathbb{E}}
\newcommand{\X}{\mathfrak{X}}
\newcommand{\id}{\mathrm{id}}
\definecolor{resaltar}{rgb}{0.8,0,0.3}
\newcommand{\alert}[1]{%
  \textcolor{resaltar}{#1}
}
\newcommand{\alertm}[1]{%
  \marginpar{%
    \ifodd\value{page} \raggedright\else \raggedleft\fi
    \footnotesize{\alert{#1}}
  }
}
\newtheorem{theorem}{Theorem}[section]
\newtheorem{corollary}[theorem]{Corollary}
\newtheorem{lemma}[theorem]{Lemma}
\theoremstyle{definition}
\theoremstyle{remark}
\newtheorem{remark}[theorem]{Remark}
\numberwithin{equation}{section}
\title[Invariant $H$-tubes around a horizontal geodesic in $\E(\kappa,\tau)$-spaces]{Invariant constant mean curvature tubes around a horizontal geodesic in $\E(\kappa,\tau)$-spaces}
\date{}
\author{Jos\'e M.Manzano}
\address{Departamento de Matem\'aticas, Universidad de Ja\'en, 23071 Ja\'en SPAIN}
\email{jmprego@ujaen.es}
\thanks{}
\subjclass[2020]{Primary 53A10; Secondary 53C30}
\keywords{Constant mean curvature, homogeneous 3-manifolds, associate family, spherical helicoids, Berger spheres}
\begin{document}

\begin{abstract}
We consider constant mean curvature surfaces (invariant by a continuous group of isometries) lying at bounded distance from a horizontal geodesic on any homogeneous $3$-manifold $\mathbb{E}(\kappa,\tau)$ with isometry group of dimension $4$. These surfaces are called horizontal tubes. We show that they foliate $\E(\kappa,\tau)$ minus one or two horizontal geodesics provided that $(1-x_0^2)\kappa+4\tau^2\leq 0$, where $x_0\approx 0.833557$. We also describe precisely how horizontal and vertical geodesics get deformed by Daniel's sister correspondence and conclude that the family of horizontal tubes is preserved by the correspondence. These tubes are topologically tori in $\mathbb{S}^2\times\mathbb{R}$ and Berger spheres, in which case we compute their conformal type and analyze numerically their isoperimetric profiles.  
\end{abstract}

\maketitle

\section{Introduction}

The study of constant mean curvature surfaces ($H$-surfaces in the sequel) in homogeneous $3$-manifolds has been particularly relevant during the last decades. This involves a twofold interest: on the one hand, to discover up to which extent properties that were well known in the case of Euclidean space $\R^3$ (and other space forms) still hold under the homogeneity condition; on the other hand, to find novel features which cannot be found in the classical theory. In the latter case, the construction of examples has played a key role, starting with the simplest case of $H$-surfaces invariant by $1$-parameter groups of isometries (e.g., see~\cite{FMP,Kase,Man13,Onnis,Pena,Tor10} and the references therein) because these surfaces often possess characteristics that throw light on the behavior of more sophisticated non-invariant examples.

We will focus on a family of invariant $H$-surfaces in $\E(\kappa,\tau)$ that has received little attention, likely because they are subtle to spot. These surfaces show up in the supercritical range of the mean curvature (i.e., if $4H^2+\kappa>0$) and will be called \emph{horizontal $H$-tubes} because they are invariant under translations along a horizontal geodesic. If $\kappa-4\tau^2=0$, they reduce to right $H$-cylinders in $\R^3$ or Hopf $H$-tori in the round sphere $\mathbb{S}^3$. The $H$-tubes have been already found in $\mathbb{S}^2\times\R$ by Pedrosa~\cite{Pedrosa}, in $\mathbb{H}^2\times\mathbb{R}$ by Onnis~\cite{Onnis} and Vržina~\cite{Vrzina} (including also the special linear group $\E(\kappa,\tau)$, $\kappa\leq 0$), in Heisenberg group $\mathrm{Nil}_3(\tau)=\E(0,\tau)$ by Figueroa, Mercuri and Pedrosa~\cite{FMP}. Very recently, Käse~\cite{Kase} has completed the picture by classifying $H$-surfaces in $\E(\kappa,\tau)$-spaces invariant with respect to circular screw-motions, among which he gets the $H$-tubes for a particular choice of the pitch. Note that there was already a well known notion of \emph{vertical $H$-tubes}, defined as the preimages by the Riemannian submersion $\pi:\E(\kappa,\tau)\to\mathbb{M}^2(\kappa)$ of curves of constant curvature $2H$, see Section~\ref{sec:preliminaries}. It seems likely that our construction can be extended to any geodesic of $\E(\kappa,\tau)$, since there is numerical evidence that $H$-tubes exist for $H>H_0$ for some $H_0>0$ depending on $\kappa$, $\tau$, and the prescribed geodesic. The product case has been further studied in~\cite{MT22} as part of the families of horizontal Delaunay $H$-surfaces. It would be also interesting to obtain $H$-surfaces of Delaunay type that bifurcate the rest of horizontal $H$-tubes with nonzero $\tau$.

In this work, we are interested in the global geometry of $H$-tubes about a given horizontal geodesic $\Gamma\subset\E(\kappa,\tau)$. In Section~\ref{sec:invariant}, we will present a common framework that gives unified expressions valid for all $\kappa$ and $\tau$, despite of the fact that $H$-tubes are invariant under circular screw motions ($\kappa>0$), translations ($\kappa=0$) or hyperbolic translations ($\kappa<0$). We will see that $H$-tubes are spanned by convex curves in the orbit space, which will help us show that $H$-tubes foliate $\E(\kappa,\tau)-\Gamma$ if $\kappa\leq 0$. As a consequence, we show that there is no immersed $H$-surface with $4H^2+\kappa\leq 0$ at bounded distance from a horizontal geodesic as an application of Mazet's halfspace theorem~\cite{Mazet2013}. If $\kappa>0$, then the $H$-tubes around $\Gamma$ produce a foliation of $\E(\kappa,\tau)-(\Gamma\cup\Gamma')$ if $(1-x_0^2)\kappa-4\tau^2\leq 0$, where $x_0\approx 0.833557$ and $\Gamma$ and $\Gamma'$ differ in a vertical translation, see Theorem~\ref{thm:foliation}. As a matter of fact, such a foliation becomes a foliation by Hopf $H$-tori in the case of the round sphere $(\kappa,\tau)=(4,1)$, in which case horizontal and vertical $H$-tubes are congruent. However, the geometry of the horizontal $H$-tubes is considerably more involved in general (e.g., vertical $H$-tubes are embedded and flat, which is not true in the horizontal case).

In the minimal case (which is supercritical only if $\kappa>0$), horizontal $H$-tubes reduce to a horizontal slices in $\mathbb{S}^2\times\R$ and to particular spherical helicoids in Berger spheres (which are Clifford tori for the round metric), see Remark~\ref{rmk:clifford}. The fact that these minimal tori are conjugate to horizontal $H$-tubes in $\mathbb{S}^2\times\R$ and $\mathbb{H}^2\times\R$ (see~\cite[\S4]{MT14}) has motivated us to investigate the whole associate family of sister immersions of minimal $H$-tori, in the sense of Daniel~\cite{Dan}. This family depends on a phase angle $\theta\in\R$ such that the value $\theta=\frac{\pi}{2}$ gives the aforementioned conjugation. In independent works, Plehnert~\cite{Ple14} and Torralbo and the author~\cite{MT14} established the foundations of conjugate constructions starting with a minimal surface to produce $H$-surfaces in product spaces (we recommend the recent survey~\cite{CMT} for further reference on this topic). However, the case $\theta\neq\frac{\pi}{2}$ has not been treated in the literature so far. Lemmas~\ref{lem:vertical-geodesics} and~\ref{lem:horizontal-geodesics} give a fairly complete depiction of the deformation of vertical and horizontal geodesics in terms of the angle of rotation of the normal. Although this analysis might be somewhat disappointing (the intermediate curves with $0<\theta<\tfrac{\pi}{2}$ do not seem to enjoy remarkable properties), the given formulas can be used to give precise numerical approximations of conjugate curves (since the angle of rotation can be easily calculated given a discrete approximation of the surface). This will be treated in a future work, see also~\cite[\S7]{CMT}.

Our study of sister surfaces will show that $H$-tubes are preserved by Daniel's correspondence (see Theorem~\ref{thm:spherical-helicoids}), which is a bit of a surprise because it means that all these $H$-surfaces close their periods simultaneously. (Usually, it is difficult to prove that a period closes.) We shall see that the correspondence induces an isometry between the universal covers of sister $H$-tubes, though each of them corresponds to a particular quotient, which can be seen as choosing a particular lattice in $\R^2$. We will give a precise description of the conformal class of these examples and show that all conformal classes of genus-one compact Riemann surfaces can be realized in this way (see Remark~\ref{rmk:conformal}). The case of vertical Hopf $H$-tori in Berger spheres has been studied by Torralbo and Urbano~\cite{TU}, which generalizes some of the results of Pinkall in the round sphere~\cite{Pinkall}.

In~\cite{TU}, it is also proved that in some Berger spheres the solutions to the isoperimetric problem (i.e., the least area surface spanning a given volume) are either $H$-spheres or embedded $H$-tori. Indeed, they prove that $H$-spheres are the solutions in some range of $\kappa$ and $\tau$ for all prescribed volumes. Therefore, the horizontal $H$-tubes become natural competitors to solve the isoperimetric problem. Although it seems impossible to compare explicitly areas and volumes, we will show numerically that the $H$-tubes have larger area than $H$-spheres if $\kappa-4\tau^2>0$ and have larger area than Hopf $H$-tori is $\kappa-4\tau^2<0$ (see Figure~\ref{fig:profiles}). This suggests that they cannot be solutions of the isoperimetric problem.

\medskip
\noindent\textbf{Acknowledgement.} I would like to thank Prof.\ Torralbo for suggesting the calculation of the isoperimetric profiles of the horizontal $H$-tori in Berger spheres. This research has been supported by the Ramón y Cajal fellowship RYC2019-027658-I as well as by the project PID2019.111531GA.I00, both funded by the Spanish agency MCIN/AEI/10.13039/501100011033. It has been also supported by the FEDER-UJA project No.\ 1380860.

\section{Preliminaries}\label{sec:preliminaries}

Given $\kappa,\tau\in\R$, there is a unique simply connected oriented Riemannian $3$-manifold $\E(\kappa,\tau)$ that admits a unit Killing submersion $\pi:\E(\kappa,\tau)\to\mathbb{M}^2(\kappa)$ with constant bundle curvature $\tau$ over the complete base surface $\mathbb{M}^2(\kappa)$ with constant curvature $\kappa$, see~\cite{Dan,Man14}. This means that the fibers of $\pi$ are the integral curves of a unit Killing vector field $\xi$ in $\E(\kappa,\tau)$. It turns out that the family $\E(\kappa,\tau)$ contains all the homogeneous $3$-manifolds with isometry group of dimension $4$ when $\kappa-4\tau^2\neq 0$. It also contains the space forms of non-negative constant sectional curvature, which can be recovered as $\mathbb{M}^3(c)=\E(4c,\sqrt{c})$ for any $c\geq 0$.

There is a standard local model that covers all $\E(\kappa,\tau)$-spaces. It was originally introduced by Cartan and consists in the open subset of $\R^3$ defined as
\[M(\kappa,\tau)=\{(x,y,z)\in\R^3:\lambda_\kappa(x,y)>0\}, \text{ where } \lambda_\kappa(x,y)=\left(1+\tfrac{\kappa}{4}(x^2+y^2)\right)^{-1},\]
endowed with the Riemannian metric
\[\mathrm{d} s^2=\lambda_\kappa^2(\mathrm{d} x^2+\mathrm{d} y^2)+(\mathrm{d} z+\tau\lambda_\kappa(y\mathrm{d} x-x\mathrm{d} y))^2.\]
The orientation in $M(\kappa,\tau)$ is chosen such that
\begin{equation}\label{eqn:standard-frame}
\begin{aligned}
 E_1&=\tfrac{1}{\lambda_\kappa}\partial_x-\tau y\,\partial_z,& 
 E_2&=\tfrac{1}{\lambda_\kappa}\partial_y+\tau x\,\partial_z,&
 E_3&=\partial_z
\end{aligned}\end{equation}
is a global positively oriented orthonormal frame. It follows that $\pi(x,y,z)=(x,y)$ is a Killing submersion with constant bundle curvature $\tau$ and unit Killing vector field $\xi=E_3$, where the Riemannian metric $\mathrm{d}s_\kappa^2=\lambda_\kappa^2(\mathrm{d}x^2+\mathrm{d}y^2)$ with constant curvature $\kappa$ is considered in the projection. Therefore, $M(\kappa,\tau)$ is a global model of $\E(\kappa,\tau)$ if and only if
 $\kappa\leq 0$. The Levi-Civita connection in the frame~\eqref{eqn:standard-frame} reads
\begin{equation}\label{eqn:levi-civita}
\begin{aligned}
\overline\nabla_{E_1}E_1&=\tfrac{\kappa y}{2}E_2,&\overline\nabla_{E_1}E_2&=-\tfrac{\kappa y}{2}E_1+\tau E_3,&\overline\nabla_{E_1}E_3&=-\tau E_2,\\
\overline\nabla_{E_2}E_1&=-\tfrac{\kappa x}{2}E_2-\tau E_3,&\overline\nabla_{E_2}E_2&=\tfrac{\kappa x}{2}E_1,&\overline\nabla_{E_2}E_3&=\tau E_1,\\
\overline\nabla_{E_3}E_1&=-\tau E_2,&\overline\nabla_{E_3}E_2&=\tau E_1,&\overline\nabla_{E_3}E_3&=0.
\end{aligned}
\end{equation}

If $\kappa<0$, a different global model for $\E(\kappa,\tau)$ is the half-space model defined as $\{(x,y,z)\in\R^2:y>0\}$ endowed with the Riemannian metric
\begin{equation}\label{eqn:halfspace-metric}\frac{\df x^2+\df y^2}{-\kappa y^2}+\left(\df z+\frac{2\tau}{\kappa y}\df x\right)^2.
\end{equation}
The conformal factor $\frac{1}{y\sqrt{-\kappa}}$ defines a metric of constant curvature $\kappa$ in the upper half-plane, and the orientation is chosen such that the frame
\begin{equation}\label{eqn:halfspace-frame}
\begin{aligned}
  E_1&=y\sqrt{-\kappa}\,\partial_x+\tfrac{2\tau}{\sqrt{-\kappa}}\partial_z,&
  E_2&=y\sqrt{-\kappa}\,\partial_y,&
  E_3&=\partial_z,
\end{aligned}\end{equation}
is orthonormal and positively oriented. The Killing submersion still reads $\pi(x,y,z)=(x,y)$ in this model (with unit Killing vector field $\xi=E_3$). A global isometry from the Cartan model to the half-space model is given in~\cite[\S2.3.2]{CMT}.
% \begin{align*}
% \Theta(x,y,z)=\left(\tfrac{\frac{4}{\sqrt{-\kappa }} y}{\bigl(\frac{2}{\sqrt{-\kappa }}+x\bigr)^2+y^2},\tfrac{-\frac{4}{\kappa
%    }-x^2-y^2}{\bigl(\frac{2}{\sqrt{-\kappa }}+x\bigr)^2+y^2},z+\tfrac{4 \tau}{\kappa}\arccos\tfrac{y}{\sqrt{\bigr(\frac{2}{\sqrt{-\kappa
%    }}+x\bigl)^2+y^2}}\!\right).
% \end{align*}

If $\kappa>0$, a global model for $\E(\kappa,\tau)$ is given by the unit $3$-sphere $\mathbb{S}^3 = \{(z,w)\in \C^2:\, |z|^2 +|w|^2 = 1\}$ equipped with the Riemannian metric
\[
  \df s^2(X, Y) = \tfrac{4}{\kappa}\left[\langle{X},{Y}\rangle + \tfrac{16\tau^2}{\kappa^2}\bigl(\tfrac{4\tau^2}{\kappa} - 1\bigl)\langle{X},{\xi}\rangle\langle{Y},{\xi}\rangle \right],
\] 
being $\langle{\cdot},{\cdot}\rangle$ the usual inner product in $\mathbb{C}^2\equiv\mathbb{R}^4$. The unitary vector field $\xi$ is defined by $\xi_{(z, w)} = \frac{\kappa}{4\tau}(iz, iw)$ and the Killing submersion is the Hopf fibration 
\[\pi: \E(\kappa, \tau) \rightarrow \mathbb{S}^2(\kappa) \subset \mathbb{C} \times  \mathbb{R} \equiv\mathbb{R}^3,\qquad \pi(z, w) = \tfrac{2}{\sqrt{\kappa}}\bigl(z\bar{w}, \tfrac{1}{2}(|z|^2 - |w|^2) \bigr),\]
see~\cite[\S2]{Tor12}. Vertical fibers are compact and have length $\frac{8\pi\tau}{\kappa}$. This model is related to the Cartan model by the Riemannian covering map (in particular, it is a local isometry) $\Theta:M(\kappa,\tau)\to\mathbb{S}^3-\{(e^{i \theta},0)\colon  \theta \in \mathbb{R}\}$ given by
\begin{equation}\label{eq:local-isometry-Daniel-Berger}
\begin{aligned}
\Theta(x, y, z) &= \frac{1}{\sqrt{1 + \tfrac{\kappa}{4}(x^2 + y^2))}} \left(\tfrac{\sqrt{\kappa}}{2}(y + ix) \exp(i \tfrac{\kappa}{4\tau}z), \exp(i \tfrac{\kappa}{4\tau} z)\right).
\end{aligned}
\end{equation}

\section{Invariant horizontal $H$-tubes}\label{sec:invariant}

Let $\Gamma\subset\E(\kappa,\tau)$ be a fixed horizontal geodesic, and consider the continuous $1$-parameter group of translations $\{\Phi_t\}_{t\in\R}$ along $\Gamma$, see~\cite[Lem.~2.1]{CMT}. The rest of isometries that leave $\Gamma$ invariant are the axial symmetries about any vertical or horizontal geodesic orthogonal to $\Gamma$. Our goal in this section is to analyze the \textsc{ode} associated to $H$-surfaces invariant by $\{\Phi_t\}_{t\in\R}$ to prove the following lemma. These surfaces will be called $H$-tubes as explained in the introduction.

\begin{lemma}\label{lemma:existence}
Let $\kappa,\tau,H\in\R$ such that $\kappa+4H^2>0$. There is a unique $H$-surface $T_H$ immersed in $\mathbb{E}(\kappa,\tau)$ invariant by any isometry of $\E(\kappa,\tau)$ that leaves $\Gamma$ invariant (uniqueness is up to vertical translations of length $\frac{2\pi\tau}{\kappa}$ in Berger spheres). The surface $T_H$ is topologically a cylinder if $\kappa\leq 0$ or a torus if $\kappa>0$.
\end{lemma}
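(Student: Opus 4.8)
The plan is to set up the ODE governing invariant surfaces and prove existence, uniqueness, and completeness of the generating curve, then read off the topology.
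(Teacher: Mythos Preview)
Your outline matches the paper's strategy in spirit, but as written it is not a proof: the substantive work lies precisely in the steps you have left unspecified, and at least one of them hides a nontrivial selection principle.

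The ODE for $\{\Phi_t\}$-invariant $H$-surfaces does not have a unique solution; it has a one-parameter family, indexed by a first integral (an ``energy'' $E$ in the paper's treatment). Most of these profile curves either fail to close up or lack the extra symmetries. The lemma asks for invariance under \emph{every} isometry preserving $\Gamma$, which includes the axial symmetries about vertical and horizontal geodesics orthogonal to $\Gamma$. It is this additional requirement that singles out a unique value of the first integral (in the paper, $E=0$): the profile curve must pass through the axis of one of those symmetries with the correct tangent direction, and that initial condition fixes $E$. Your plan does not mention this mechanism, and without it you cannot get uniqueness---nor can you argue that the resulting curve is periodic rather than, say, spiraling.

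Once $E=0$ is identified, the paper integrates the system explicitly (using $\varphi$ as parameter) to obtain closed-form $2\pi$-periodic functions $r(\varphi)$, $h(\varphi)$ with the correct parities, which simultaneously establishes existence, the required symmetries, and the topology. If you intend an abstract ODE argument instead of explicit integration, you still need to explain why the $E=0$ orbit is a closed embedded curve in the orbit space (this uses $4H^2+\kappa>0$ in an essential way) and why the parities of $r$ and $h$ give back the axial symmetries. None of this is automatic from ``existence, uniqueness, and completeness'' of solutions to the ODE.
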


It is natural to assume that $4H^2+\kappa>0$ in this construction (i.e., we assume the mean curvature is supercritical), since we will show that there are no $H$-surfaces at bounded distance from $\Gamma$ if $4H^2+\kappa\leq 0$, see Corollary~\ref{coro:halfspace}. 

Recall that theses surfaces have been discussed with different parametrizations in~\cite{Pedrosa,Onnis,Man13,Pena,Vrzina,Kase}, but we are interested in a particular one that will help us understand whether they foliate or not. We will assume that $\tau\neq 0$ and distinguish cases depending on the sign of $\kappa$, since the case $\tau=0$ can be recovered as a limit. It is important to notice that there is no natural notion of height when $\tau\neq 0$. To overcome this issue, we will parametrize $\Gamma$ as $\gamma(s)$ and a orthogonal horizontal geodesic as $\alpha(s)$, both having unit-speed. The vertical surface $\pi^{-1}(\alpha)$ contains the subset of the orbit space (for the action of the isometry group) in which the profile curves of the $H$-tubes are defined. Moreover, $\pi^{-1}(\alpha)$ is parametrized by the horizontal distance $r$ along $\alpha$ and the vertical distance $h$ over $\alpha$. All these geometric functions adopt different expressions depending on the sign of $\kappa$.

\medskip
\noindent\textbf{Case A. Berger Spheres ($\kappa>0$).} Consider the unit-speed horizontal geodesics
  \begin{align*}
  \gamma(s)&=\left(\tfrac{2}{\sqrt{\kappa}}\cos(s\sqrt{\kappa}),\tfrac{2}{\sqrt{\kappa}}\sin(s\sqrt{\kappa}),\tfrac{2\tau}{\sqrt{\kappa}}s\right),\\
  \alpha(s)&=\left(\tfrac{2}{\sqrt{\kappa}}\tan(s\tfrac{\sqrt{\kappa}}{2}+\tfrac{\pi}{4}),0,0\right),
  \end{align*}
  in the Cartan model. They are orthogonal at $(\tfrac{2}{\sqrt{\kappa}},0,0)=\gamma(0)=\alpha(0)$. Translations along $\gamma$ are given by the $1$-parameter group of isometries
  \[\Phi_t(x,y,z)=\left(x\cos(t)-y\sin(t),x\sin(t)+y\cos(t),z+\tfrac{2\tau}{\kappa}t\right).\]
  A surface invariant by the action of $\Phi_t$ can be parametrized in terms of the coordinates $r$ (the horizontal distance along $\alpha$) and $h$ (the height over $\alpha$) as
  \begin{equation}\label{eqn:X:berger}
  X(u,v)=\left(\tfrac{2}{\sqrt{\kappa}}\tan(\tfrac{\sqrt{\kappa}}{2}r(u)+\tfrac\pi4)\cos(v),\tfrac{2}{\sqrt{\kappa}}\tan(\tfrac{\sqrt{\kappa}}{2}r(u)+\tfrac\pi4)\sin(v),h(u)+\tfrac{2\tau}{\kappa}v\right).
  \end{equation}
  The mean curvature of~\eqref{eqn:X:berger} is really cumbersome. However, there is a trick that simplifies the computations. The area element $W^2=\langle X_u,X_u\rangle\langle X_v,X_v\rangle-\langle X_u,X_v\rangle^2$ can be computed as
  \[W^2=\frac{\cos^2(r(u)\sqrt{\kappa})}{\kappa}h'(u)^2+\frac{4\tau^2+(\kappa-4\tau^2)\cos^2(r(u)\sqrt{\kappa})}{\kappa^2}r'(u)^2.\]
  We can employ a new auxiliary smooth function $\varphi$ such that
  \begin{equation}\label{eqn:rh:berger}
  r'(u)=\frac{-\sin(\varphi(u))}{\sqrt{4\tau^2+(\kappa-4\tau^2)\cos^2(r(u)\sqrt{\kappa})}},\qquad h'(u)=\frac{\cos(\varphi(u))}{\sqrt{\kappa}\cos(r(u)\sqrt{\kappa})}.
  \end{equation}   
  This is nothing but an arc-length reparametrization of $u\mapsto(r(u),h(u))$ for a certain Riemannian metric in $\R^2$ that makes the denominator of the mean curvature equation constant. After some long computations, we find that the surface parametrized by $X$ has constant mean curvature $H$ if and only if
  \begin{equation}\label{eqn:theta:berger}
  \varphi'(u)=\frac{2H+\sqrt{\kappa}\cos(\varphi(u))\tan(r(u)\sqrt{\kappa})}{\sqrt{4\tau^2+(\kappa-4\tau^2)\cos^2(r(u)\sqrt{\kappa})}}.
  \end{equation} 
  This gives an \textsc{ode} system on $h$, $r$ and $\varphi$, such that the energy function
  \[E=\cos(r(u)\sqrt{\kappa})\cos(\varphi(u))-\tfrac{2H}{\sqrt{\kappa}}\sin(r(u)\sqrt{\kappa})\]
  is constant along solutions (i.e., it does not depend on $u$). We would like the surface to be invariant by axial symmetry with respect to the $z$-axis (which is a vertical geodesic), since this isometry leaves $\Gamma$ invariant. This implies that there must be some $u$ such that $r(u)=0$ and $h'(u)=0$, in which case $\cos(\varphi(u))=0$ by~\eqref{eqn:rh:berger}. This gives a unique candidate value $E=0$ to solve Lemma~\ref{lemma:existence}, and we shall see in what follows that this value works. Note that $E=0$ leads to
  \begin{equation}\label{eqn:ru:berger}
  r(u)=\tfrac{1}{\sqrt{\kappa}}\arctan(\tfrac{\sqrt{\kappa}}{2H}\cos(\varphi(u))).
  \end{equation}
  Replacing $u$ by $\varphi$ as the parameter, it follows that
  \begin{align*}
  \frac{\mathrm{d}r}{\mathrm{d}\varphi}&=\frac{r'(u)}{\varphi'(u)}=\frac{-\sin(\varphi)}{2H+\sqrt{\kappa}\cos(\varphi)\tan(r\sqrt{\kappa})}=\frac{-2H\sin(\varphi)}{4H^2+\kappa\cos^2(\varphi)},\\
  \frac{\mathrm{d}h}{\mathrm{d}\varphi}&=\frac{h'(u)}{\varphi'(u)}=\frac{\frac{\cos(\varphi)}{\sqrt{2\kappa}\cos(r\sqrt{\kappa})}}{\frac{2H+\sqrt{\kappa}\cos(\varphi)\tan(r\sqrt{\kappa})}{\sqrt{4\tau^2+(\kappa-4\tau^2)\cos^2(r\sqrt{\kappa})}}}=\frac{2\cos(\varphi)\sqrt{H^2+\tau^2\cos^2(\varphi)}}{4H^2+\kappa\cos^2(\varphi)}
  \end{align*}
  These two equations can be integrated explicitly. The function $r$ can be actually recovered from~\eqref{eqn:ru:berger}, whereas $h(r)$ is determined up to a constant of integration. This constant is chosen such that such that $h(0)=0$ so that the surface is invariant by the axial symmetry about $\alpha$. If $H>0$, this integration gives
  \begin{equation}\label{eqn:parametrization:berger}
  \begin{aligned}
    r(\varphi)&=\tfrac{1}{\sqrt{\kappa}}\arctan(\tfrac{\sqrt{\kappa}}{2H}\cos(\varphi)),\\
    h(\varphi)&=\tfrac{2H\sqrt{\kappa-4\tau^2}}{\kappa\sqrt{4H^2+\kappa}}\arctanh\frac{H\sqrt{\kappa-4\tau^2}\sin(\varphi)}{\sqrt{4H^2+\kappa}\sqrt{H^2+\tau^2\cos^2(\varphi)}}\\
    &\qquad+\tfrac{4\tau}{\kappa}\arctan\frac{\tau\sin(\varphi)}{\sqrt{H^2+\tau^2}+\sqrt{H^2+\tau^2\cos^2(\varphi)}}
  \end{aligned}\end{equation}
  We have also used the supercritical condition $4H^2+\kappa>0$ in the integration process. Formula~\eqref{eqn:parametrization:berger} holds for $\kappa-4\tau^2\geq 0$ (elliptic Berger spheres) and also for $\kappa-4\tau^2<0$ (hyperbolic Berger spheres) via the identity $\arctanh(ix)=i\arctan(x)$ valid for all $x\in\R$. Some of these curves are represented in Figure~\ref{fig:foliation-berger}. Note that $r(\varphi)$ and $h(\varphi)$ are $2\pi$-periodic functions defined for all $\varphi\in\R$ so we obtain an $H$-torus for all $H>0$. Since $r(\varphi)$ is an even function and $h(\varphi)$ is odd, it follows that the surface is invariant by axial symmetries about any vertical or horizontal geodesic orthogonal to $\Gamma$. The case $H=0$ also fits in the whole family, as we shall see in Theorem~\ref{thm:foliation}, but it needs a different parametrization.

  \begin{remark}\label{rmk:clifford}
  If $H=0$, consider the unit-speed horizontal geodesic 
  \[\beta(s)=\left(\tfrac{2}{\sqrt{\kappa}}\tan(s\tfrac{\sqrt{\kappa}}{2}+\tfrac{\pi}{4}),0,\tfrac{\pi\tau}{\kappa}\right),\]
  which is a vertical translation of $\alpha$. The surface $T_0$ spanned by the group $\Phi_t$ acting on $\beta$ is a (minimal) spherical helicoid of axes the vertical geodesics through $\beta(\frac{-\pi}{2\sqrt{\kappa}})$ and $\beta(\frac{\pi}{2\sqrt{\kappa}})$, which project to antipodal points of $\mathbb{M}^2(\kappa)$ by the Hopf fibration. Except at these antipodal points, each $p\in\M^2(\kappa)$ has four preimages by the projection $\pi|_{T_0}:T_0\to\M^2(\kappa)$. Note that the vertical geodesics of $T_0$ are the orbits of $\{\Phi_t\}_{t\in\R}$ which are not transversal to our orbit space $\pi^{-1}(\alpha)$, whence the orbits in $\E(\kappa,\tau)-T_0$ correspond to values $(r,h)$ such that $|r|<\frac{\pi}{2\sqrt{\kappa}}$ and $v$ not an odd multiple of $\frac{\tau\pi}{\kappa}$. The surface $T_0$ is a minimal torus (indeed, $T_0$ is a Clifford torus for the round metric). A more detailed description of the geometry of $T_0$ can be found in Section~\ref{sec:spherical-helicoids}, see also~\cite[\S3]{Tor12} or~\cite[Ex.~3.16]{CMT}.
  \end{remark}

  Formula~\eqref{eqn:parametrization:berger} also holds true in the limit case $\tau=0$ (provided that $H>0$) and gives a parametrization of the $H$-tori in $\mathbb{S}^2\times\R$ different from that in~\cite{Man13}. 
  
  \begin{figure}
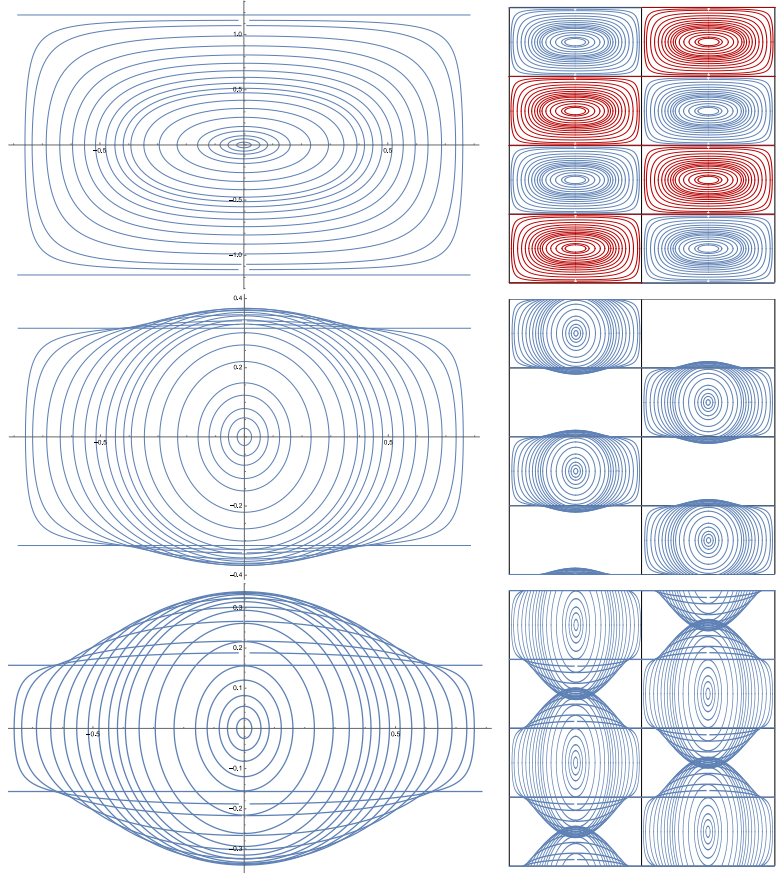

    \centering\includegraphics[width=0.8\textwidth]{foliation-berger-hyperbolic.pdf}
    \centering\includegraphics[width=0.8\textwidth]{foliation-berger-elliptic.pdf}
    \centering\includegraphics[width=0.8\textwidth]{foliation-berger-ultraelliptic.pdf}
    \caption{Profile curves of $H$-tori in the hyperbolic Berger sphere $\E(4,1.5)$ (top) and the elliptic Berger spheres $\E(4,0.4)$ (center) and $\E(4,0.2)$ (bottom). On the right, we can see the rescaled intersection of $\mathcal T$ and the Clifford torus $\pi^{-1}(\alpha)$. In the hyperbolic case (top), we have also drawn in red color the translated curves (corresponding to $H<0$) that complete the family $\mathcal T$. In the central image, we can see that the foliation fails, whilst in bottom picture we can even find non-embedded $H$-tubes.
    }\label{fig:foliation-berger}
  \end{figure}

\medskip
\noindent\textbf{Case B. Heisenberg group ($\kappa=0$).} In this case the computations are considerably more simple. The surfaces we will obtain here have already been parametrized as bigraphs in~\cite[Thm.~6]{FMP}. We will follow the same argument as in case A and consider unit-speed horizontal geodesics
 \[\gamma(s)=(s,0,0),\qquad \alpha(s)=(0,s,0),\] 
which are orthogonal at their intersection point $(0,0,0)=\gamma(0)=\alpha(0)$. Translations along $\gamma$ are given by the group of isometries $\Phi_t(x,y,z)=(x+t,y,z+\tau y t)$, whence a surface invariant by $\Phi_t$ can be parametrized in terms of the horizontal distance $r$ along $\alpha$ and the vertical distance $h$ over $\alpha$ as:
\begin{equation}\label{eqn:X:nil}
X(u,v)=(v,r(u),h(u)+\tau v r(u)).
\end{equation}
We can introduce an auxiliary function $\varphi$ such that the constant mean curvature $H$ equation for this parametrization reads
 \[r'(u)=\frac{-\sin(\varphi(u))}{\sqrt{1+4\tau^2r(u)^2}},\qquad h'(u)=\cos(\varphi(u)),\qquad \varphi'(u)=\frac{2H}{\sqrt{1+4\tau^2r(u)^2}}.\]
The energy $E=\cos(\varphi(u))-2Hr(u)$ is constant along solutions, and the desired $H$-tubes must satisfy $E=0$ by the same reasons as in case A. Since $E=0$ implies that $r(u)=\frac{1}{2H}\cos(\varphi(u))$, we can write \textsc{ode}s for $r$ and $h$ as functions of $\varphi$:
\begin{align*}
  \frac{\mathrm{d}r}{\mathrm{d}\varphi}&=\frac{r'(u)}{\varphi'(u)}=\tfrac{-1}{2H}\sin(\varphi),\\
  \frac{\mathrm{d}h}{\mathrm{d}\varphi}&=\frac{h'(u)}{\varphi'(u)}=\frac{\cos(\varphi)}{\frac{2H}{\sqrt{1+4\tau^2r^2}}}=\frac{\cos(\varphi)\sqrt{H^2+\tau^2\cos^2(\varphi)}}{2H^2}.
  \end{align*}
  This finally leads to the following explicit expressions:
  \begin{equation}\label{eqn:parametrization:Nil}
  \begin{aligned}
    r(\varphi)&=\tfrac{1}{2H}\cos(\varphi),\\
    h(\varphi)&=\tfrac{H^2+\tau ^2}{4 H^2 \tau }\arcsin\frac{\tau\sin(\varphi)}{\sqrt{H^2+\tau ^2}}+\tfrac{1}{4 H^2}  \sin(\varphi) \sqrt{H^2+\tau^2\cos
     ^2(\varphi)}
    \end{aligned}\end{equation}
  Note that the supercritical condition $\kappa+4H^2>0$ is relevant since it excludes the case $H=0$ in which the above solution is not defined. Since the parametrization~\eqref{eqn:parametrization:Nil} has the desired symmetries, it follows that it is the (unique) surface that solves Lemma~\ref{lemma:existence}.

\begin{figure}
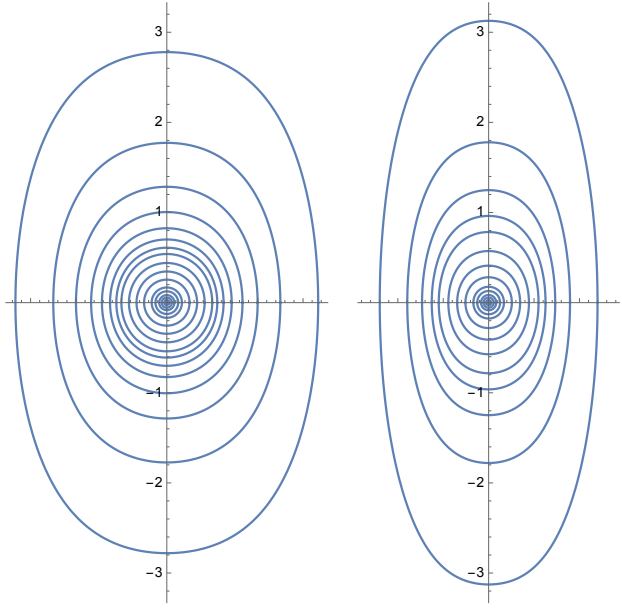

    \centering\includegraphics[height=8cm]{foliation-nil.pdf}\quad\includegraphics[height=8cm]{foliation-SL.pdf}
    \caption{Profiles of $H$-cylinders in Heisenberg space $\E(0,0.5)$ (left) and in the special linear group $\E(-1,1)$ (right).}
  \end{figure}

\medskip
\noindent\textbf{Case C. Special linear group ($\kappa<0$).} In this last case, the $H$-cylinders we are looking for have been previously spotted by Vr\v{z}ina~\cite{Vrzina} (using a nonexplicit continuity method) and also belong to the families of $H$-surfaces invariant by hyperbolic translations discovered by Peñafiel~\cite{Pena}. We will employ the halfspace model in which hyperbolic translations have the simple expression $\Phi_t(x,y,z)=(e^tx,e^ty,z)$, which contains the horizontal geodesic orbit $\gamma(t)=(0,e^{t\sqrt{-\kappa}},0)$ parametrized by unit speed. The horizontal unit-speed geodesic $\alpha(t)$ intersecting $\gamma(t)$ orthogonally at $(0,1,0)=\gamma(0)=\alpha(0)$ is given by
\begin{align*}
  \alpha(s)&=\left(\tanh(s\sqrt{-\kappa}),\sech(s\sqrt{-\kappa}),\tfrac{4\tau}{\kappa}\arccos\frac{\tanh(\tfrac{\sqrt{-\kappa}}{2}s)}{\sqrt{1+\tanh^2(\tfrac{\sqrt{-\kappa}}{2}s)}}\right).
\end{align*}
A surface invariant by $\Phi_t$ can be parametrized in terms of the functions $r$ and $h$ as
\begin{equation}\label{eqn:X:SL}
\begin{aligned}
X(u,v)=\Biggl(e^v\tanh(r(u)\sqrt{-\kappa}),\ &e^v\sech(r(u)\sqrt{-\kappa}),\\
&h(u)+\tfrac{4\tau}{\kappa}\arccos\frac{\tanh(r(u)\frac{\sqrt{-\kappa}}{2})}{\sqrt{1+\tanh^2(r(u)\frac{\sqrt{-\kappa}}{2})}}\Biggr)\end{aligned}
\end{equation}
As in the previous cases, we introduce an auxiliary function $\varphi$, so that the condition of constant mean curvature $H\in\R$ translates into the \textsc{ode} system
\begin{align*}
r'(u)&=\frac{-\sin(\varphi(u))}{\sqrt{-4\tau^2-(\kappa-4\tau^2)\cosh{\!}^2(r(u)\sqrt{-\kappa}))}}\\
h'(u)&=\frac{\cos(\varphi(u))}{\sqrt{-\kappa}\cosh(r(u)\sqrt{-\kappa})}\\
\varphi'(u)&=\frac{2H-\sqrt{-\kappa}\cos(\varphi(u))\tanh(r(u)\sqrt{-\kappa})}{\sqrt{-4\tau^2-(\kappa-4\tau^2)\cosh{\!}^2(r(u)\sqrt{-\kappa}))}}
\end{align*}
The following energy is a first integral:
\[E=\cosh(r(u)\sqrt{-\kappa})\cos(\varphi(u))-\tfrac{2H}{\sqrt{-\kappa}}\sinh(r(u)\sqrt{-\kappa})\]
and $H$-tubes correspond to $E=0$, which gives the substitution
\[r(u)=\tfrac{1}{\sqrt{-\kappa}}\arctanh\left(\tfrac{\sqrt{-\kappa}}{2H}\cos(\varphi(u))\right).\]
Using the parameter $\varphi$ instead of $u$, we get
\begin{align*}
  \frac{\mathrm{d}r}{\mathrm{d}\varphi}&=\frac{r'(u)}{\varphi'(u)}=\frac{\frac{-\sin(\varphi)}{\sqrt{-4\tau^2-(\kappa-4\tau^2)\cosh{\!}^2(r(u)\sqrt{-\kappa}))}}}{\frac{2H-\sqrt{-\kappa}\cos(\varphi)\tanh(r\sqrt{-\kappa})}{\sqrt{-4\tau^2-(\kappa-4\tau^2)\cosh{\!}^2(r(u)\sqrt{-\kappa}))}}}=\frac{-2H\sin(\varphi)}{4H^2+\kappa\cos^2(\varphi)},\\
  \frac{\mathrm{d}h}{\mathrm{d}\varphi}&=\frac{h'(u)}{\varphi'(u)}=\frac{\frac{\cos(\varphi)}{\sqrt{-\kappa}\cosh(r\sqrt{-\kappa})}}{\frac{2H-\sqrt{-\kappa}\cos(\varphi)\tanh(r\sqrt{-\kappa})}{\sqrt{-4\tau^2-(\kappa-4\tau^2)\cosh{\!}^2(r(u)\sqrt{-\kappa}))}}}=\frac{2\cos(\varphi)\sqrt{H^2+\tau^2\cos^2(\varphi)}}{4H^2+\kappa\cos^2(\varphi)}.
  \end{align*}
  This finally leads to the following explicit expressions:
  \begin{equation}\label{eqn:parametrization:SL}
  \begin{aligned}
    r(\varphi)&=\tfrac{1}{\sqrt{-\kappa}}\arctanh(\tfrac{\sqrt{-\kappa}}{2H}\cos(\varphi)),\\
    h(\varphi)&=\tfrac{2H\sqrt{-\kappa+4\tau^2}}{\kappa\sqrt{4H^2+\kappa}}\arctan\left(\frac{H\sqrt{-\kappa+4\tau^2}\sin(\varphi)}{\sqrt{4H^2+\kappa}\sqrt{H^2+\tau^2\cos^2(\varphi)}}\right)\\
    &\qquad-\tfrac{4\tau}{\kappa}\arctan\left(\frac{\tau\sin(\varphi)}{\sqrt{H^2+\tau^2}+\sqrt{H^2+\tau^2\cos^2(\varphi)}}\right)
    \end{aligned}\end{equation}
Observe that $-\kappa+4\tau^2>0$ since $\kappa<0$; also, the supercritical condition $4H^2+\kappa>0$ plays an important role in~\eqref{eqn:parametrization:SL}. Since $h(\varphi)$ and $r(\varphi)$ are $2\pi$-periodic and have the desired symmetries again, they prove Lemma~\ref{lemma:existence} in the case $\kappa<0$.

\section{Embeddedness and foliation}

Throughout this section, we will fix $\kappa,\tau\in\R$. We will study the profile curves $\alpha_H(\varphi)=(r_H(\varphi),h_H(\varphi))$ defined by~\eqref{eqn:parametrization:berger},~\eqref{eqn:parametrization:Nil} and~\eqref{eqn:parametrization:SL}, where we will employ an additional subindex $H$ to indicate the dependence on the mean curvature. Note that all cases can be treated together (regardless the sign of $\kappa$) because we have seen that the derivatives of $r_H$ and $h_H$ admit unified expressions.

Note that a normal to $\alpha_H$ in the flat $(r,h)$-plane is given by
\[\eta_H(\varphi)=(-h'_H(\varphi),r'_H(\varphi))=\left(\frac{-2\cos(\varphi)\sqrt{H^2+\tau^2\cos^2(\varphi)}}{4H^2+\kappa\cos^2(\varphi)},\frac{-2H\sin(\varphi)}{4H^2+\kappa\cos^2(\varphi)}\right),\]
whose components have the same signs as $(-\cos(\varphi),-\sin(\varphi))$. Since $\eta_H(\varphi)$ never vanishes, we deduce that $\eta_H$ has winding number $1$ around the origin. Moreover,
\[h_H'(\varphi)r_H''(\varphi)-h_H''(\varphi)r_H'(\varphi)=\frac{H(H^4+H^2\tau^2x^2(3-x^2)+\tau^2x^4(2-x^2))}{4(H^2+\tau^2x^2)^{3/2}(H^2+x^2)^2}\geq 0,\]
where $x=\cos(\varphi)$, so $\alpha_H$ is a convex curve in the $(r,h)$-plane, and hence embedded.

However, the embeddedness of the tube $T_H$ might fail after translating $\alpha_H$ along the geodesic $\Gamma$ when the fibers have finite length, i.e, if the ambient space is a Berger sphere. The fibers of a Berger sphere have length $\frac{8\pi\tau}{\kappa}$ and the third coordinate of $X(u,v)$ in~\eqref{eqn:X:berger} increases by $\frac{4\pi\tau}{\kappa}$ whenever we add $2\pi$ to the parameter $v$, whence the embeddedness of $T_H$ fails if and only if the maximum of $h_H(\varphi)$ is greater than or equal to $\frac{2\pi\tau}{\kappa}$ (this condition also prevents the curve $\alpha_H$ from intersecting itself before translations, see Figure~\ref{fig:foliation-berger}).

\begin{theorem}\label{thm:foliation}
Let $\Gamma\subset\E(\kappa,\tau)$ be a horizontal. The family $\mathcal T=\{T_H:4H^2+\kappa>0\}$ of $H$-tubes around $\Gamma$ produces a foliation if and only if $(1-x_0^2)\kappa-4\tau^2\leq 0$, where $x_0\approx 0.833557$ is the unique positive solution of the equation $x\arctanh(x)=1$.
\begin{enumerate}[label=\emph{(\alph*)}]
  \item If $\kappa\leq 0$, then it foliates $\E(\kappa,\tau)-\Gamma$.
  \item If $\kappa\geq 0$, then it foliates $\E(\kappa,\tau)-(\Gamma\cup\Gamma')$, where $\Gamma'$ is the horizontal geodesic that differs from $\Gamma$ in a vertical translation of length $\frac{2\tau\pi}{\kappa}$. Note that in this case $H$ varies from $-\infty$ to $+\infty$.
\end{enumerate}
\end{theorem}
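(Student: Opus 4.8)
The plan is to reduce the statement to a one-dimensional monotonicity problem in the orbit space. Since every tube $T_H$ is invariant under the translation group $\{\Phi_t\}$, foliating $\E(\kappa,\tau)$ (minus $\Gamma$, resp.\ $\Gamma\cup\Gamma'$) by the $T_H$ is the same as foliating the orbit space $\pi^{-1}(\alpha)$ by the profile curves $\alpha_H$. From the discussion preceding the statement I already know that each $\alpha_H$ is a closed convex curve, symmetric about both the $r$- and the $h$-axis, enclosing the origin $(0,0)$ that represents $\Gamma$; since $h_H$ is even and $r_H$ odd in $H$, the supports of $\alpha_{-H}$ and $\alpha_H$ coincide. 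Thus the question becomes whether, as $H$ runs over $\{4H^2+\kappa>0\}$, these convex curves are pairwise disjoint and exhaust the relevant region. The two natural monotone invariants of such a nested family are the width $r_{\max}(H)=r_H(0)$ and the height $h_{\max}(H)=h_H(\tfrac\pi2)$, the intercepts with the two axes (attained at $\varphi=0$ and $\varphi=\tfrac\pi2$ because the denominator $4H^2+\kappa\cos^2\varphi\geq 4H^2+\kappa>0$ forces $h_H'$ to have the sign of $\cos\varphi$).

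The decisive computation concerns these two intercepts. The explicit formulas show that $r_{\max}(H)$ is strictly decreasing in $H$ in every case (e.g.\ $r_{\max}=\tfrac1{\sqrt\kappa}\arctan\tfrac{\sqrt\kappa}{2H}$ when $\kappa>0$), so the curves never meet on the $r$-axis and the first collision, as $H$ varies, can only occur on the $h$-axis, i.e.\ when two curves share the same top point $(0,h_{\max})$. I would therefore analyze $h_{\max}(H)$. One checks $h_{\max}(H)\to 0$ as $H\to\infty$, while as $H\to 0^+$ (the relevant endpoint precisely when $\kappa>0$) the first summand of $h_H(\tfrac\pi2)$ vanishes and the second tends to $\tfrac{4\tau}{\kappa}\arctan 1=\tfrac{\pi\tau}{\kappa}$, exactly the height of the minimal leaf $T_0$ of Remark~\ref{rmk:clifford}. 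Differentiating at this endpoint yields the clean expression
\[
h_{\max}'(0^+)=\frac{2}{\kappa}\bigl(x\arctanh x-1\bigr),\qquad x=\sqrt{\tfrac{\kappa-4\tau^2}{\kappa}},
\]
where for hyperbolic Berger spheres $x=iy$ is imaginary and $x\arctanh x=-y\arctan y\le 0$. Because $t\mapsto t\arctanh t$ increases from $0$ to $+\infty$ on $(0,1)$, the sign of $h_{\max}'(0^+)$ is decided by comparing $x$ with the root $x_0$ of $x\arctanh x=1$, and $x\arctanh x\le 1$ is algebraically equivalent to $(1-x_0^2)\kappa-4\tau^2\le 0$ in all cases. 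This is the source of the threshold.

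The equivalence then assembles as follows. If $(1-x_0^2)\kappa-4\tau^2\le 0$, then $h_{\max}'(0^+)\le 0$ and (the main analytic step below) $h_{\max}$ is globally decreasing on its interval of definition; together with the strict monotonicity of $r_{\max}$ and with convexity this makes $\{\alpha_H\}$ a genuinely nested family. For $\kappa\le 0$ the orbit space is a whole plane, $h_{\max}$ ranges over $(0,\infty)$, the curves exhaust $\R^2\setminus\{0\}$, giving case (a); here the hypothesis is automatic since $(1-x_0^2)\kappa\le 0\le 4\tau^2$. For $\kappa>0$ the orbit space is the Clifford torus $\pi^{-1}(\alpha)$: the curves with $H>0$ sweep the region between $\Gamma$ and the minimal leaf $T_0$ (whose height $\tfrac{\pi\tau}{\kappa}$ is the supremum of $h_{\max}$), and their vertical translates of length $\tfrac{2\pi\tau}{\kappa}$ from Lemma~\ref{lemma:existence} (equivalently, letting $H$ run through $(-\infty,0)$) sweep the complementary region around $\Gamma'$, giving case (b). Conversely, if $(1-x_0^2)\kappa-4\tau^2>0$, then $h_{\max}'(0^+)>0$, so $h_{\max}$ first rises above $\tfrac{\pi\tau}{\kappa}$ and later returns to $0$; being non-injective it attains some value twice, producing two distinct curves with a common top point, which must cross. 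Hence the family is not a foliation, and once the overshoot reaches $\tfrac{2\pi\tau}{\kappa}$ the tubes even cease to be embedded, as recorded before the statement.

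The hardest point is the \emph{global} monotonicity of $h_{\max}$: knowing only the sign of $h_{\max}'$ at the endpoint $H\to 0^+$, I must rule out interior sign changes of $h_{\max}'(H)$ over the whole admissible range, a one-variable but delicate estimate on the explicit integrand $\tfrac{2\cos\varphi\sqrt{H^2+\tau^2\cos^2\varphi}}{4H^2+\kappa\cos^2\varphi}$. That the foliation threshold coincides exactly with $h_{\max}'(0^+)=0$ strongly suggests $h_{\max}'$ can only change sign at that endpoint, and I expect to prove this by differentiating $\partial_H\bigl(h_H(\tfrac\pi2)\bigr)$ under the integral and showing it keeps constant sign once nonnegative at $H=0$. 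A secondary, more bookkeeping, difficulty is upgrading ordered intercepts to genuinely non-crossing curves, for which I would verify that along each vertical line $r=\mathrm{const}$ the intersection height is monotone in $H$ through the substitution $\cos\varphi=\tfrac{2}{\sqrt\kappa}\tan(\sqrt\kappa\,r)\,H$, together with the identification of $\Gamma'$ and of the two complementary regions on the Clifford torus when $\kappa>0$.
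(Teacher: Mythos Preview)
Your overall strategy---reduce to the orbit space, use convexity of $\alpha_H$, and study the monotonicity of the height intercept $h_{\max}(H)=h_H(\tfrac\pi2)$---coincides with the paper's. But the two points you flag as remaining difficulties are precisely where the paper's argument is sharper, and your proposal leaves a genuine gap in each.

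\textbf{Global monotonicity of $h_{\max}$.} You compute $h_{\max}'(0^+)=\tfrac{2}{\kappa}\bigl(x\arctanh x-1\bigr)$ with $x=\sqrt{(\kappa-4\tau^2)/\kappa}$ and then concede that extending this sign information to all $H$ is ``the hardest point'', hoping to differentiate under an integral. This is unnecessary: since $h_H(\tfrac\pi2)$ is given in closed form by~\eqref{eqn:parametrization:berger} (and its analogues), one differentiates directly and obtains, for \emph{every} $H$,
\[
\frac{\partial h_H(\tfrac\pi2)}{\partial H}=\frac{2}{4H^2+\kappa}\Bigl(x(H)\arctanh x(H)-1\Bigr),\qquad x(H)=\sqrt{\tfrac{\kappa-4\tau^2}{4H^2+\kappa}}\,,
\]
when $\kappa-4\tau^2>0$ (and the $\arctan$ analogue when $\kappa-4\tau^2<0$, which is always negative). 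The quantity $x(H)$ decreases strictly from $x(0)=\sqrt{(\kappa-4\tau^2)/\kappa}$ to $0$ as $H\to\infty$, and $t\mapsto t\arctanh t$ is increasing, so the derivative changes sign at most once, precisely when $x(0)>x_0$, i.e.\ when $(1-x_0^2)\kappa-4\tau^2>0$. Your endpoint computation is just the case $H=0$ of this identity; recognizing that the \emph{same} structure persists for all $H$ is what closes the gap.

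\textbf{No crossings off the $h$-axis.} You propose to check monotonicity of the height along every vertical line $r=\mathrm{const}$. The paper instead argues by tangency: if the family fails to foliate, continuity in $H$ forces some $\alpha_{H_1}$ and $\alpha_{H_2}$ to be tangent; writing out $\alpha_{H_1}(\varphi_1)=\alpha_{H_2}(\varphi_2)$ and $\alpha_{H_1}'(\varphi_1)\parallel\alpha_{H_2}'(\varphi_2)$, a short algebraic manipulation using only the unified derivatives $r_H'=\tfrac{-2H\sin\varphi}{4H^2+\kappa\cos^2\varphi}$ and $h_H'=\tfrac{2\cos\varphi\sqrt{H^2+\tau^2\cos^2\varphi}}{4H^2+\kappa\cos^2\varphi}$ forces either $H_1=H_2$ or $\cos\varphi_1=\cos\varphi_2=0$. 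This reduces everything to the injectivity of $H\mapsto h_H(\tfrac\pi2)$ in one stroke, and avoids your proposed line-by-line analysis.

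A minor point: your parity remark that $\alpha_H$ and $\alpha_{-H}$ have the same support is correct as a set equality, but in the paper's convention the tubes $T_H$ with $H<0$ (for $\kappa>0$) are the vertical translates of $T_{|H|}$ by $\tfrac{2\pi\tau}{\kappa}$, which is what actually fills the second rectangle around $\Gamma'$.
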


\begin{proof}
Consider the family $\mathcal T$ with the additional restriction $H>0$ (the case $H=0$ will be added \emph{ad hoc} later). As $H\to\infty$, both $r_H(\varphi)$ and $h_H(\varphi)$ converge uniformly to zero whereas $\alpha_H(\varphi)$ stays away from $(0,0)$ for all $H$ such that $4H^2+\kappa>0$. Since the family of curves $\alpha_H$ is continuous with respect to $H$, it will fail to foliate an open subset of the $(r,h)$-plane if and only if we can find $0<H_1<H_2$ such that $\alpha_{H_1}$ is tangent to $\alpha_{H_2}$ at some point, so there are $\varphi_1,\varphi_2\in\R$ and $\rho>0$ such that 
\begin{equation}\label{eqn:foliation:eqn1}
\begin{aligned}
(r_{H_2}(\varphi_2),h_{H_2}(\varphi_2))&=(r_{H_1}(\varphi_1),h_{H_1}(\varphi_1)),\\
 (r_{H_2}'(\varphi_2),h'_{H_2}(\varphi_2))&=(\rho\, r_{H_1}'(\varphi_1),\rho\, h'_{H_1}(\varphi_1)).
\end{aligned}
\end{equation}
We can further assume that $\varphi_1,\varphi_2\in[0,\frac\pi2]$ by the symmetries of the curves.

The first component of the first equation of~\eqref{eqn:foliation:eqn1} gives $\frac{\cos(\varphi_1)}{H_1}=\frac{\cos(\varphi_2)}{H_2}$, so the second component of the second equation of~\eqref{eqn:foliation:eqn1} can be written as
\begin{align*}
\cos(\varphi_1)\sqrt{H_1^2+\tau^2\cos^2(\varphi_1)}&=\rho H_2\cos(\varphi_2)\sqrt{1+\tfrac{\tau^2}{H_2^2}\cos^2(\varphi_2)}\\
&=\rho H_2\cos(\varphi_2)\sqrt{1+\tau^2\tfrac{\tau^2}{H_1^2}\cos^2(\varphi_1)}\\
&=\rho\tfrac{H_2^2}{H_1^2}\cos(\varphi_1)\sqrt{H_1^2+\tau^2\cos^2(\varphi_1)}.
\end{align*}
It follows that $\cos(\varphi_1)=0$ if and only if $\cos(\varphi_2)=0$.
\begin{itemize}
  \item If $\cos(\varphi_1)\neq0$, then we deduce that $H_1^2=\rho H_2^2$. The first component of the second equation of~\eqref{eqn:foliation:eqn1} implies that
  \[H_1\sin(\varphi_1)=\rho H_2\sin(\varphi_2)=\tfrac{H_1^2}{H_2}\sin(\varphi_2),\]
  so we find that $\frac{\sin(\varphi_1)}{H_1}=\frac{\sin(\varphi_2)}{H_2}$. This finally gives
  \[\frac{1}{H_1^2}=\frac{\cos^2(\varphi_1)}{H_1^2}+\frac{\sin^2(\varphi_1)}{H_1^2}=\frac{\cos^2(\varphi_2)}{H_2^2}+\frac{\sin^2(\varphi_2)}{H_2^2}=\frac{1}{H_2^2},\]
  whence $H_1=H_2$ and we are done (this means no such tangency point can occur at points other than $\varphi_1=\varphi_2=\frac{\pi}{2}$ regardless the values of $\kappa$ and $\tau$).

  \item If $\cos(\varphi_1)=0$ and $\cos(\varphi_2)=0$, then $\varphi_1=\varphi_2=\frac{\pi}{2}$ and $r_{H_1}(\varphi_1)=r_{H_2}(\varphi_2)=0$. Therefore, the family $\mathcal T$ produces a foliation if and only if $H\mapsto h_H(\frac{\pi}{2})$ is one-to-one. We will distinguish two cases:
  \begin{enumerate}
    \item If $\kappa-4\tau^2>0$, it follows from~\eqref{eqn:parametrization:berger} that
  \begin{equation}\label{eqn:foliation:eqn2}
  \frac{\partial h_H(\frac{\pi}{2})}{\partial H}=\frac{2}{4H^2+\kappa}\left(\frac{\sqrt{\kappa-4\tau^2}}{\sqrt{4H^2+\kappa}}\arctanh\left(\frac{\sqrt{\kappa-4\tau^2}}{\sqrt{4H^2+\kappa}}\right)-1\right).
  \end{equation}
  Since $x\arctanh(x)=1$ has a unique solution $x_0\approx 0.833557$, we infer from~\eqref{eqn:foliation:eqn2} that $H\mapsto h_H(\frac{\pi}{2})$ is increasing for $H<H_0$ and decreasing for $H>H_0$. This assumes that $\frac{\kappa-4\tau^2}{4H_0^2+\kappa}=x_0^2$, or equivalently $(1-x_0^2)\kappa-4\tau^2=4H_0^2x_0^2$, has a solution $H_0>0$. Such an $H_0$ exists if and only if $(1-x_0^2)\kappa-4\tau^2> 0$, in which case there is no foliation.
  
  \item If $\kappa-4\tau^2<0$, then we compute using~\eqref{eqn:parametrization:berger},~\eqref{eqn:parametrization:Nil} and~\eqref{eqn:parametrization:SL}:
  \begin{equation}\label{eqn:foliation:eqn3}
  \qquad\ \ \frac{\partial h_H(\frac{\pi}{2})}{\partial H}=\frac{-2}{4H^2+\kappa}\left(\frac{\sqrt{-\kappa+4\tau^2}}{\sqrt{4H^2+\kappa}}\arctan\left(\frac{\sqrt{-\kappa+4\tau^2}}{\sqrt{4H^2+\kappa}}\right)+1\right).
  \end{equation} 
  The right-hand side of~\eqref{eqn:foliation:eqn3} is negative, so we can say that the maximum height $h_H(\frac{\pi}{2})$ is strictly decreasing and $\mathcal T$ produces a foliation.
\end{enumerate}
\end{itemize}
It remains to show that the foliation actually covers $\E(\kappa,\tau)-\Gamma$. If $\kappa\leq 0$, this is easy since each $\alpha_H$ is convex and Equations~\eqref{eqn:parametrization:Nil} and~\eqref{eqn:parametrization:SL} yield the limits
\[\lim_{H\to 0}r_H(0)=\lim_{H\to 0}h_H(\tfrac{\pi}{2})=+\infty,\qquad \lim_{H\to +\infty}r_H(0)=\lim_{H\to +\infty}h_H(\tfrac{\pi}{2})=0.\]
Assume now that $\kappa>0$ and $(1-x_0^2)\kappa-4\tau^2<0$. Using~\eqref{eqn:parametrization:berger}, we find the following limits for a fixed value of $\varphi\in[0,\frac{\pi}{2}]$:
\begin{align*}
\lim_{H\to 0}h_H(\varphi)&=\tfrac{2\tau\varphi}{\kappa},& \lim_{H\to 0}r_H(\varphi)&=\begin{cases}\tfrac{\pi}{2\sqrt{\kappa}}&\text{if }\varphi\neq\frac{\pi}{2},\\0&\text{if }\varphi=\frac{\pi}{2},\end{cases}\\
\lim_{H\to+\infty}h_H(\varphi)&=0&\lim_{H\to+\infty}r_H(\varphi)&=0.
\end{align*}
By convexity and symmetry, we deduce that $\alpha_H$ foliates the whole open rectangle $(\frac{-\pi}{2\sqrt{\kappa}},\frac{\pi}{2\sqrt{\kappa}})\times(\frac{-\pi\tau}{\kappa},\frac{\pi\tau}{\kappa})$ minus the origin $(0,0)$, which represents the geodesic orbit $\Gamma$. By Remark~\ref{rmk:clifford}, the foliation is complete if we add the minimal torus $T_0$ as well as the vertical translation of length $\frac{2\tau\pi}{\kappa}$ of each $T_H$. These translated surfaces can be thought of as $T_{H}$ with $H<0$; in the $(r,h)$-plane, they foliate the rectangle $(\frac{-\pi}{2\sqrt{\kappa}},\frac{\pi}{2\sqrt{\kappa}})\times(\frac{\pi\tau}{\kappa},\frac{3\pi\tau}{\kappa})$ minus the point $(0,\frac{2\pi\tau}{\kappa})$, which corresponds to $\Gamma'$.
\end{proof}

\begin{corollary}\label{coro:halfspace}
If $4H^2+\kappa\leq 0$, there are no $H$-surfaces properly immersed in $\E(\kappa,\tau)$ at bounded distance from a horizontal geodesic $\Gamma$.
\end{corollary}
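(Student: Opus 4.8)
The plan is to feed the foliation of Theorem~\ref{thm:foliation}(a) into the maximum principle, as packaged by Mazet's halfspace theorem~\cite{Mazet2013}. First I would note that $4H^2+\kappa\le 0$ forces $\kappa\le-4H^2\le 0$, so we are in case~(a) and, writing $H_c:=\tfrac{1}{2}\sqrt{-\kappa}$ for the critical value (with $H_c=0$ when $\kappa=0$), the tubes $\{T_{H'}:H'>H_c\}$ foliate $\E(\kappa,\tau)-\Gamma$. The hypothesis becomes $|H|\le H_c$, so the candidate surface is at most critical whereas every leaf is strictly supercritical. I would record the two features of this foliation that drive the argument: each leaf $T_{H'}$ bounds a closed solid tube $U_{H'}$ containing $\Gamma$ and has convex profile $\alpha_{H'}$, so its mean curvature vector points toward $\Gamma$ and its mean curvature with respect to the inner unit normal $\nu$ equals $H'>H_c$; moreover $r_{H'}(0)\to+\infty$ as $H'\to H_c^+$ by~\eqref{eqn:parametrization:Nil} and~\eqref{eqn:parametrization:SL}, so the regions $U_{H'}$ exhaust $\E(\kappa,\tau)$ and escape every bounded neighbourhood of $\Gamma$ as their mean curvature decreases to the infimum $H_c$.

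Now suppose, seeking a contradiction, that $\Sigma$ is a properly immersed $H$-surface with $d(\cdot,\Gamma)\le d<\infty$ on $\Sigma$. Since the near-critical tubes enclose the $d$-neighbourhood of $\Gamma$, the surface $\Sigma$ lies in some $U_{H'_0}$; setting $H'_*:=\sup\{H':\Sigma\subset U_{H'}\}$, nestedness of the foliation together with the receding property yields $H_c<H'_*<\infty$, and $\Sigma$ sits on the mean-convex (inner) side of $T_{H'_*}$ and touches it. Were the contact attained at a finite point $p$, the interior tangency principle applied with the normal $\nu$ would force the constant $\langle\vec{H}_\Sigma,\nu\rangle\ge H'_*$; but $|\langle\vec{H}_\Sigma,\nu\rangle|=|H|\le H_c$, so $H'_*\le\langle\vec{H}_\Sigma,\nu\rangle\le H_c<H'_*$, which is absurd.

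The genuine difficulty is that $\Sigma$ is noncompact, a cylinder trailing along the noncompact geodesic $\Gamma$, so the contact with $T_{H'_*}$ need not be realized at a finite point: the infimum defining $H'_*$ may only be approached along a sequence of points of $\Sigma$ escaping to infinity. This is exactly the situation Mazet's halfspace theorem~\cite{Mazet2013} is built to resolve. Because the entire configuration is invariant under the translations $\{\Phi_t\}$ along $\Gamma$, which preserve the foliation (and hence each $U_{H'}$), one translates $\Sigma$ by a suitable sequence $\Phi_{t_n}$ centred on the escaping near-contact points and extracts a limit $H$-surface $\Sigma_\infty\subset U_{H'_*}$ tangent to $T_{H'_*}$ at a finite point. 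The tangency principle applied to $\Sigma_\infty$ and $T_{H'_*}$ then reproduces the contradiction $H'_*\le\langle\vec{H}_{\Sigma_\infty},\nu\rangle\le H_c<H'_*$, so no such $\Sigma$ exists.

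I expect the main obstacle to be precisely this passage to a limit tangency, i.e.\ upgrading the noncompact, possibly-at-infinity contact to an honest interior tangency to which the Hopf comparison principle applies; this is the step where the robustness of~\cite{Mazet2013} over a naive sliding argument is indispensable, the remainder being the nested convex foliation of Theorem~\ref{thm:foliation}(a) combined with the strict inequality $|H|\le H_c<H'$ between the mean curvature of $\Sigma$ and that of every leaf.
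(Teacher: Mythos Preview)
Your overall strategy---feed the foliation of Theorem~\ref{thm:foliation}(a) into Mazet's halfspace theorem---is exactly the paper's approach. However, there is a gap in how you invoke~\cite{Mazet2013}. The translation-and-limit argument you sketch does not work as stated: to extract a smoothly convergent subsequence of the translates $\Phi_{t_n}(\Sigma)$ you would need a priori curvature and local area bounds on $\Sigma$, which you do not have for an arbitrary properly immersed $H$-surface. Mazet's theorem circumvents this precisely by placing all the hypotheses on the \emph{leaves} of the foliation rather than on $\Sigma$. What you must verify is that each $T_{H'}$ is parabolic (this follows from its linear intrinsic area growth, being invariant under a one-parameter group of isometries with compact cross-section), that the second fundamental form of each $T_{H'}$ is uniformly bounded (immediate from the compactness of the generating curve $\alpha_{H'}$), and that any two leaves $T_{H'}$ and $T_{H''}$ are uniformly quasi-isometric (via the map identifying points with the same coordinates $(\varphi,v)$). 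The paper checks exactly these three conditions; once they are in place, \cite[Thm.~7]{Mazet2013} delivers the contradiction directly, with no compactness argument on $\Sigma$ needed.

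A minor point: the paper handles $\kappa=0$ separately by quoting the classical halfspace theorems of Hoffman--Meeks ($\tau=0$) and Daniel--Hauswirth ($\tau\neq 0$), whereas you fold it into the general argument. Your uniform treatment is fine in principle, since the foliation of Theorem~\ref{thm:foliation}(a) is available for $\kappa=0$ as well.
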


\begin{proof}
If $\kappa=0$, then the statement follows from the halfspace theorems of Hoffman--Meeks in Euclidean space~\cite{HM90} ($\tau=0$) and Daniel--Hauswirth in Heisenberg group~\cite{DH09} ($\tau\neq 0$), so we will assume that $\kappa<0$. The existence of the foliation given by Theorem~\eqref{thm:foliation} allows us to extend literally the proof of~\cite[Thm.~1.3]{MT22} using Mazet's general halfspace theorem~\cite[Thm.~7]{Mazet2013}. Briefly, to apply this result we need that: (a) each $H$-tube is parabolic (this follows because $T_H$ has intrinsic linear area growth), (b) the second fundamental form of each $C_H$ is uniformly bounded (this follows from the fact that the generating curve of $T_H$ is compact), and (c) each pair of $H$-tubes $T_H$ and $T_{H'}$ are uniformly quasi-isometric (such a quasi-isometry consists in sending points of $T_H$ to points of $T_{H'}$ with the same coordinates $(\varphi,v)$).
\end{proof}

\section{Sister minimal tori in Berger spheres}

Let $\phi:\Sigma\to\E(\kappa,\tau)$ be an isometric immersion of a two-sided Riemannian surface $\Sigma$. A global smooth unit normal $N$ defines the shape operator $Av = -\overline\nabla_vN$ for all tangent vectors $v$, as well as the angle function $\nu=\langle N,\xi\rangle\in\mathcal C^\infty(\Sigma)$. We can also consider the tangent part of the Killing field $T=\xi-\nu N\in\X(\Sigma)$. The orientation in $\E(\kappa,\tau)$ and the choice of $N$ induce an orientation in $\Sigma$ expressed in terms of a $\frac{\pi}{2}$-rotation $J$ in the tangent bundle such that $\{u,Ju,N_p\}$ is positively oriented (or equivalently $Ju=N_p\times u$) for all nonzero $u\in T_p\Sigma$. The immersion $\phi$ is univocally determined by the quadruplet $(A,T,J,\nu)$, called the fundamental data of the immersion, up to orientation-preserving isometries that also preserve the orientation of the fibers, see~\cite[Prop.~4.1]{Dan}.

Let $\kappa,\tau,H,\widetilde\kappa,\widetilde\tau,\widetilde H\in\mathbb{R}$ be such that $\kappa-4\tau^2=\widetilde\kappa-4\widetilde\tau^2$ and $\tau+iH=e^{i\theta}(\widetilde\tau+i\widetilde H)$ for some $\theta\in\R$. Given a simply-connected Riemannian surface $\Sigma$, Daniel~\cite{Dan} found an isometric correspondence between $\widetilde H$-immersions $\widetilde\phi:\Sigma\to\mathbb{E}(\widetilde\kappa,\widetilde\tau)$ and $H$-immersions $\phi:\Sigma\to\mathbb{E}(\kappa,\tau)$ by changing the fundamental data as
\begin{equation}\label{eqn:fundamental-rotation}
(A, T, J,\nu)=\bigl(\Rot_\theta\circ(\widetilde A-\widetilde H\,\id)+H\,\id,\Rot_\theta(\widetilde T),\widetilde J,\widetilde\nu\bigr),
\end{equation}
where $\Rot_\theta=\cos(\theta)\id+\sin(\theta)J$ is a rotation of angle $\theta$ in the tangent bundle of $\Sigma$. The immersions $\widetilde\phi$ and $\phi$ are called \emph{sister immersions}. In the case $\theta=\frac{\pi}{2}$, they are also called \emph{conjugate immersions} and represent a great tool to obtain $H$-surfaces in product spaces $\mathbb{M}^2(\kappa)\times\R$ starting with a minimal surface in a $\E(4H^2+\kappa,H)$, see~\cite{CMT} and the references therein. Note that one can restrict to the interval $\theta\in[0,\pi)$ since adding $\pi$ to the phase angle results in a trivial transformation that changes the sign of $\tau$, see~\cite[Rmk.~4]{CMT}.

\subsection{Behavior of vertical and horizontal geodesics}

In the sequel, we will consider two sister immersions $\widetilde\Sigma\looparrowright\E(\widetilde\kappa,\widetilde\tau)$ and $\Sigma\looparrowright\E(\kappa,\tau)$ related by a phase angle $\theta\in[0,\pi)$. For simplicity, we will denote by $\widetilde\Sigma$ and $\Sigma$ these immersed surfaces. We are interested in the relation between conjugate curves $\widetilde\gamma:[0,\ell]\to\widetilde\Sigma$ and $\gamma:[0,\ell]\to\Sigma$ by assuming that $\widetilde\Sigma$ is minimal $\widetilde\gamma$ is a vertical or horizontal geodesic. The case $\theta=\frac{\pi}{2}$ was analyzed first in~\cite{MT14,Ple14} but the following two lemmas are the first results in the general case and have independent proofs.

\begin{lemma}[Deformation of vertical geodesics]\label{lem:vertical-geodesics}
Let $\widetilde\gamma:[0,\ell]\to\widetilde\Sigma$ parametrize a vertical segment such that $\widetilde\gamma'=\widetilde\xi$. We can write the normal of $\widetilde\Sigma$ along $\widetilde\gamma$ as
\begin{equation}\label{lem:vertical-geodesics:eqn1}
\widetilde N_{\widetilde{\gamma}(t)}=\cos(\vartheta(t)) \widetilde E_1(t)+\sin(\vartheta(t))\widetilde E_2(t),
\end{equation}
for some $\vartheta\in\mathcal C^\infty([0,\ell])$, where $\{\widetilde E_1,\widetilde E_2,\widetilde E_3=\widetilde\xi\}$ is the positively oriented orthonormal frame in $\E(\widetilde\kappa,\widetilde\tau)$ given by~\eqref{eqn:standard-frame}.
\begin{enumerate}[label=\emph{(\alph*)}]
    \item The conjugate curve $\gamma$ satisfies $\langle\gamma',\xi\rangle=\cos(\theta)$.
    \item The projection $\alpha=\pi\circ\gamma$ has geodesic curvature $\kappa_g=2H-\frac{\vartheta'}{\sin(\theta)}$ as a curve of $\mathbb{M}^2(\kappa)$ with respect to $\pi_*N$ as unit normal along $\alpha$.

    \item Assume that $\widetilde R\subseteq\widetilde\Sigma$ and $R\subseteq\Sigma$ are conjugate regions on which $\nu>0$, and such that $\widetilde\gamma\subset\partial\widetilde R$ and $\gamma\subset\partial R$.
    \begin{itemize}
        \item If $\vartheta'>0$, then $J\widetilde\gamma'$ (resp.\ $J\gamma'$) is a unit outer conormal to $\widetilde R$ (resp.\ $R$) along $\widetilde\gamma$ (resp.\ $\gamma$), $\pi_*N$ points to the interior of $\pi(R)$ along $\alpha$, and $R$ locally approaches $\gamma$ from below (see Figure~\ref{fig:orientation}, top). 
        
        \item If $\vartheta'<0$, then $J\widetilde\gamma'$ (resp.\ $J\gamma'$) is a unit inner conormal to $\widetilde R$ (resp.\ $R$) along $\widetilde\gamma$ (resp.\ $\gamma$), $\pi_*N$ points to the exterior of $\pi(R)$ along $\alpha$, and $R$ locally approaches $\gamma$ from above (see Figure~\ref{fig:orientation}, bottom).
    \end{itemize}
\end{enumerate}
\end{lemma}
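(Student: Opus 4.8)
The plan is to transport everything to the common abstract surface $\Sigma\cong\widetilde\Sigma$ underlying Daniel's correspondence, on which the two immersions share the induced metric, the rotation $J=\widetilde J$ and the angle function $\nu=\widetilde\nu$, and the domain curve $c$ satisfies $\gamma=\phi\circ c$, $\widetilde\gamma=\widetilde\phi\circ c$. Since $\widetilde\gamma'=\widetilde\xi$ is tangent to $\widetilde\Sigma$, we have $\widetilde\nu=0$ and $\widetilde T=\widetilde\xi$ along $\widetilde\gamma$; in particular $\widetilde N\perp\widetilde\xi$, which is exactly what makes the expression~\eqref{lem:vertical-geodesics:eqn1} meaningful. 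I would also record from $\tau+iH=e^{i\theta}\widetilde\tau$ the scalar relations $\tau=\widetilde\tau\cos(\theta)$ and $H=\widetilde\tau\sin(\theta)$, which are what ultimately produce the coefficient $2H$ in part (b).

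For part (a), identifying $c'$ with $\widetilde\gamma'=\widetilde T$ in the abstract tangent bundle and using $T=\Rot_\theta(\widetilde T)$ from~\eqref{eqn:fundamental-rotation}, I would compute $\langle\gamma',\xi\rangle=\langle\gamma',T\rangle=\langle\widetilde T,\Rot_\theta\widetilde T\rangle=\cos(\theta)|\widetilde T|^2$, where the $J$-term drops out by skew-symmetry; since $|\widetilde T|^2=1-\widetilde\nu^2=1$, this gives $\langle\gamma',\xi\rangle=\cos(\theta)$.

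The core is part (b). First I would differentiate~\eqref{lem:vertical-geodesics:eqn1} along $\widetilde\gamma$ using only the rows $\overline\nabla_{E_3}E_i$ of~\eqref{eqn:levi-civita}, obtaining $\widetilde A\widetilde\gamma'=-\overline\nabla_{\widetilde\gamma'}\widetilde N=(\vartheta'-\widetilde\tau)\widetilde J\widetilde\gamma'$ (in particular the normal curvature of the ambient geodesic $\widetilde\gamma$ is zero, as it must be). Feeding this into the shape-operator rule $A=\Rot_\theta\circ\widetilde A+H\id$ from~\eqref{eqn:fundamental-rotation} and using $J=\widetilde J$ yields $\langle A\gamma',\gamma'\rangle=(2\widetilde\tau-\vartheta')\sin(\theta)$. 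To convert this into the geodesic curvature of $\alpha=\pi\circ\gamma$, I would note that $\nu=0$ along $\gamma$ forces $\xi$ tangent and $N$ horizontal there, so $n:=\pi_*N$ is a genuine unit normal to $\alpha$ while $|\alpha'|=\sin(\theta)$ by part (a); then a submersion identity of the form $\langle\nabla_{\alpha'}\alpha',n\rangle=\langle\overline\nabla_{\gamma'}\gamma'_h,N\rangle+\cos(\theta)\tau\langle J\alpha',n\rangle$, together with $\overline\nabla_X\xi=\tau(X\times\xi)$ to handle $\gamma'_h=\gamma'-\cos(\theta)\xi$, expresses $\kappa_g$ in terms of $\langle A\gamma',\gamma'\rangle$, $\tau$ and $\theta$. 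Substituting $\tau=\widetilde\tau\cos(\theta)$ and $H=\widetilde\tau\sin(\theta)$ collapses everything to $\kappa_g=2H-\vartheta'/\sin(\theta)$. I expect the main obstacle to be precisely this last conversion: keeping the bundle-curvature correction terms of the Killing submersion and the orientation sign relating $J\alpha'$ to $n$ (fixed by the convention that $\{u,Ju,N\}$ is positively oriented) consistent, so that the stray $\widetilde\tau$-terms cancel in the stated way.

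For part (c), I would first derive the gradient formula $\nabla\nu=-AT-\tau JT$ from $\overline\nabla_v\xi=\tau(v\times\xi)$ and $Av=-\overline\nabla_vN$. Evaluated on $\widetilde\Sigma$ along $\widetilde\gamma$, where $\widetilde T=\widetilde\gamma'$ and $\widetilde A\widetilde\gamma'=(\vartheta'-\widetilde\tau)\widetilde J\widetilde\gamma'$, the two $\widetilde\tau$-contributions cancel and leave the clean identity $\nabla\widetilde\nu=-\vartheta'\widetilde J\widetilde\gamma'$; this cancellation is the crux. Since $\widetilde\nu=0$ on $\widetilde\gamma$ and $\widetilde\nu>0$ on $\widetilde R$, the sign of $\vartheta'$ decides on which side $\widetilde R$ lies, hence whether $\widetilde J\widetilde\gamma'$ is the outer ($\vartheta'>0$) or inner ($\vartheta'<0$) conormal; because the correspondence preserves the metric, $J$ and $\nu$, the same statement transfers verbatim to $\gamma\subset\partial R$ in $\Sigma$. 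Finally, the direction of $\pi_*N$ relative to $\pi(R)$ follows from the sign of $\kappa_g$ obtained in (b), and the claim that $R$ approaches $\gamma$ from below or above follows by combining $\langle\gamma',\xi\rangle=\cos(\theta)>0$ from (a) with this conormal analysis, matching Figure~\ref{fig:orientation}.
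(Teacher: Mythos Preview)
Your treatment of (a) and (b) is essentially the paper's argument. You and the paper both differentiate~\eqref{lem:vertical-geodesics:eqn1} along $\widetilde\gamma$ using~\eqref{eqn:levi-civita} to obtain $\widetilde A\widetilde\gamma'=(\vartheta'-\widetilde\tau)J\widetilde\gamma'$, push this through the rotation in~\eqref{eqn:fundamental-rotation}, and then relate $\langle A\gamma',\gamma'\rangle$ to $\kappa_g$ via the decomposition $\gamma'=\cos(\theta)\xi+\widehat\alpha'$ together with $\overline\nabla_X\xi=\tau(X\times\xi)$. The paper organizes the last step slightly differently (it also computes $\langle A\gamma',J\gamma'\rangle$ and substitutes both into the relation coming from $\widetilde\tau-\vartheta'=-\langle\widetilde A\widetilde\gamma',J\widetilde\gamma'\rangle$), but your single-equation route via $\langle A\gamma',\gamma'\rangle=(2\widetilde\tau-\vartheta')\sin\theta$ is equally valid and a bit shorter. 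Be aware that the precise ``submersion identity'' you wrote is not quite the right shape; what one actually gets is $\langle A\gamma',\gamma'\rangle=\sin^2(\theta)\kappa_g+2\tau\sin(\theta)\cos(\theta)$, and the factor $2$ in the cross term comes from $[\widehat\alpha',\xi]=0$ contributing twice.

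For (c), your gradient argument $\nabla\widetilde\nu=-\widetilde A\widetilde T-\widetilde\tau J\widetilde T=-\vartheta'J\widetilde\gamma'$ is a clean way to pin down whether $J\widetilde\gamma'$ is the inner or outer conormal, and transferring it to $\Sigma$ via invariance of $\nu$ and $J$ is correct; the paper does not make this step explicit. However, your claim that ``the direction of $\pi_*N$ relative to $\pi(R)$ follows from the sign of $\kappa_g$ obtained in (b)'' is a genuine gap. The horizontal projection of the conormal $J\gamma'$ turns out to be \emph{tangent} to $\alpha$ (proportional to $\pi_*(N\times\xi)$, which is a rotation of $\pi_*N$), so first-order information does not decide on which side of $\alpha$ the region $\pi(R)$ lies. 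The paper resolves this by the boundary maximum principle: since $\kappa_g<2H$ when $\vartheta'>0$, if $\pi_*N$ pointed outward then the vertical $H$-cylinder tangent to $\Sigma$ at $\gamma(p)$ would lie locally on one side of $\Sigma$, forcing coincidence. You should invoke this comparison explicitly. Finally, for the above/below assertion the relevant quantity is $\langle J\gamma',\xi\rangle=\sin(\theta)>0$ (the vertical component of the conormal), not $\langle\gamma',\xi\rangle=\cos(\theta)$, which can be negative for $\theta\in(\tfrac{\pi}{2},\pi)$.
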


\begin{figure}
\begin{center}
\includegraphics[width=0.75\textwidth]{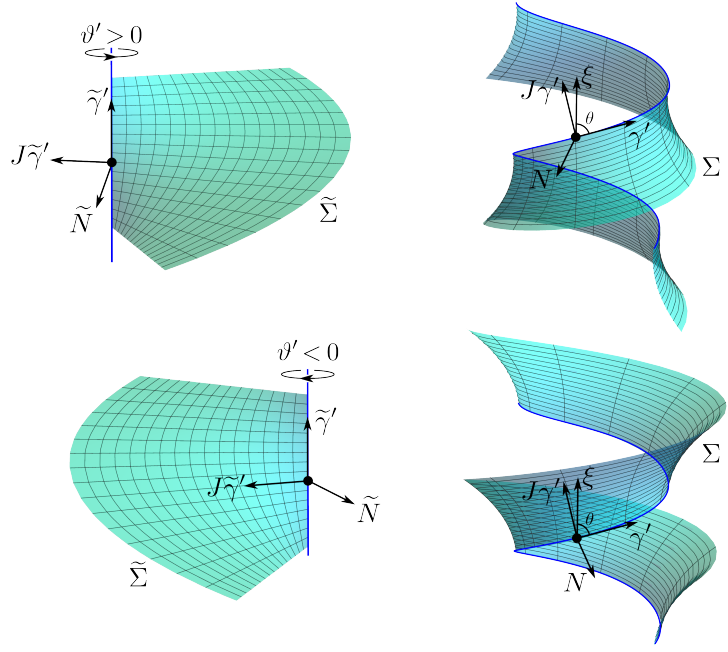}
\caption{Chosen orientation of a minimal surface $\widetilde\Sigma$ along a vertical geodesic $\widetilde\gamma$ and the corresponding curve $\gamma$ on its sister immersion $\Sigma$ according to the direction of rotation of $\widetilde N$.}\label{fig:orientation}
\end{center}\end{figure}

\begin{proof}
Since $\widetilde\gamma$ is vertical, we have $\nu=0$ along $\widetilde\gamma$, whence $\widetilde T=\widetilde\xi=\widetilde\gamma'$ along $\widetilde\gamma$ and $T=\xi$ along $\gamma$. Therefore, we get item (a) from the following computation:
\[\langle\gamma',\xi\rangle=\langle\gamma',T\rangle=\langle\widetilde\gamma',\cos(\theta)\widetilde T+\sin(\theta)J\widetilde T\rangle=\cos(\theta)\langle\widetilde\gamma',\widetilde\xi\rangle=\cos(\theta),\]
where we have used that $\langle\widetilde\gamma',J\widetilde T\rangle=0$ since $\widetilde\gamma'$ is vertical and $J\widetilde T$ is horizontal.

As for item (b), we will begin by differentiating~\eqref{lem:vertical-geodesics:eqn1} with respect to $\widetilde\gamma'$ using the ambient Levi-Civita connection in~\eqref{eqn:levi-civita}:
\begin{equation}\label{lem:vertical-geodesics:eqn2}
\begin{aligned}
\widetilde\nabla_{\widetilde\gamma'}\widetilde N&=-\vartheta'\sin(\vartheta) \widetilde E_1+\vartheta'\cos(\vartheta) \widetilde E_2+\cos(\vartheta)\widetilde\nabla_{\widetilde E_3}\widetilde E_1+\sin(\vartheta)\widetilde\nabla_{\widetilde E_3}\widetilde E_2\\
&=(\widetilde\tau-\vartheta')(\sin(\vartheta)\widetilde E_1-\cos(\vartheta)\widetilde E_2)=(\widetilde\tau-\vartheta')(\widetilde N\times\widetilde\gamma').
\end{aligned}\end{equation}
In particular, from~\eqref{lem:vertical-geodesics:eqn2} we infer that
\begin{equation}\label{lem:vertical-geodesics:eqn3}
\begin{aligned}
 \widetilde\tau-\vartheta'&=\langle \widetilde\nabla_{\widetilde\gamma'}\widetilde N,\widetilde N\times\widetilde\gamma'\rangle=-\langle\widetilde A\widetilde\gamma',J\widetilde\gamma'\rangle\\
 &=-\cos(\theta)\langle A\gamma'-H\gamma',J\gamma'\rangle+\sin(\theta)\langle JA\gamma'-HJ\gamma',J\gamma'\rangle\\
 &=-\cos(\theta)\langle A\gamma',J\gamma'\rangle+\sin(\theta)\langle A\gamma',\gamma'\rangle-H\sin(\theta).
\end{aligned}\end{equation}
We will next calculate the quantities $\langle A\gamma',J\gamma'\rangle$ and $\langle A\gamma',\gamma'\rangle$. To this end, observe that item (a) implies that $\gamma(t)=\phi_{t\cos(\theta)}(\widehat\alpha(t))$ up to a vertical translation, where $\{\phi_t\}_{t\in\R}$ is the $1$-parameter group of vertical translations and $\widehat\alpha$ is the horizontal lift of $\alpha=\pi\circ\gamma$ (i.e., $\gamma$ is a constant-angle lift of $\alpha$ as in~\cite[Eq.~(3-3)]{Man14}). This means that $\gamma'=\cos(\theta)\xi+\widehat\alpha'$, so we deduce that $\|\alpha'\|=\|\widehat\alpha'\|=\sin(\theta)$. Taking into account that $[\widetilde\alpha',\xi]=0$ and $\overline\nabla_{\xi}\xi=0$, we get that
\begin{equation}\label{lem:vertical-geodesics:eqn4}
\begin{aligned}
 \langle A\gamma',\gamma'\rangle=-\langle\overline\nabla_{\gamma'}N,\gamma'\rangle=\langle\overline\nabla_{\gamma'}\gamma',N\rangle&=\langle\overline\nabla_{\widehat\alpha'}\widehat\alpha',N\rangle+2\cos(\theta)\langle\overline\nabla_{\widehat\alpha'}\xi,N\rangle\\
 &=\sin^2(\theta)\kappa_g+2\tau\sin(\theta)\cos(\theta).
\end{aligned}\end{equation}
We have used that $\frac{1}{\sin(\theta)}\widehat\alpha'$ is horizontal and projects to $\frac{1}{\sin(\theta)}\alpha'$ (the unit tangent to $\alpha$) and $N$ projects to $\pi_*N$ (the unit normal to $\alpha$); since $\pi$ is a Riemannian submersion, we get that $\langle\overline\nabla_{\widehat\alpha'}\widehat\alpha',N\rangle=\sin^2(\theta)\kappa_g$. We also used that $\overline\nabla_{\widehat\alpha'}\xi=\tau\widehat\alpha\times\xi=\tau\sin(\theta)N$ because $\{\frac{1}{\sin(\theta)}\widehat\alpha',\xi,N\}$ is orthonormal and positively oriented. 

By the same argument, $J\gamma'=N\times(\widehat\alpha'+\cos(\theta)\xi)=\sin(\theta)\xi-\tfrac{\cos(\theta)}{\sin(\theta)}\widehat\alpha'$, whence
\begin{equation}\label{lem:vertical-geodesics:eqn5}
\begin{aligned}
\langle A\gamma',J\gamma'\rangle&=-\langle\overline\nabla_{\gamma'}N,J\gamma'\rangle=\langle\overline\nabla_{\gamma'}J\gamma',N\rangle%=\sin(\theta)\langle\overline\nabla_{\gamma'}\xi,N\rangle-\cot(\theta)\langle\overline\nabla_{\gamma'}\widehat\alpha',N\rangle
\\
&=\sin(\theta)\langle\overline\nabla_{\widehat\alpha'}\xi,N\rangle-\tfrac{\cos(\theta)}{\sin(\theta)}\langle\overline\nabla_{\widehat\alpha'}\widehat\alpha',N\rangle-\tfrac{\cos^2(\theta)}{\sin(\theta)}\langle\overline\nabla_{\widehat\alpha'}\xi,N\rangle\\
&=\tau(\sin^2(\theta)-\cos^2(\theta))-\sin(\theta)\cos(\theta)\kappa_g.
\end{aligned}
\end{equation}
Plugging~\eqref{lem:vertical-geodesics:eqn4} and~\eqref{lem:vertical-geodesics:eqn5} in~\eqref{lem:vertical-geodesics:eqn3}, we get $\widetilde\tau-\vartheta'=\tau\cos(\theta)-H\sin(\theta)+\kappa_g\sin(\theta)$, and the desired formula follows by the fact that $\widetilde\tau =\tau\cos(\theta)+H\sin(\theta)$.

As for item (c), observe that $\vartheta'>0$ (resp.\ $\vartheta'<0$) implies that $\kappa_g<2H$ (resp.\ $\kappa_g>2H$) by item (b). Therefore, if $\pi_*N$ points to the exterior (resp.\ interior) of 
$\pi(R)$ at $\alpha(p)$ for some $p$, then the vertical $H$-cylinder with normal $\gamma(p)$ lies locally outside
$\pi^{-1}(\pi(R))$ and is tangent to $\Sigma$ at $\gamma(p)$, in contradiction with the boundary maximum principle for $H$-surfaces.
\end{proof}

Although we have used the frame $\{\widetilde E_1,\widetilde E_2,\widetilde E_3\}$ to express the normal $\widetilde N$, it can be substituted with another positively oriented orthonormal frame $\{X_1,X_2,\widetilde\gamma'\}$ such that $X_1$ and $X_2$ project to constant vectors of $\mathbb{M}^2(\widetilde\kappa)$. We also remark that, if $\widetilde\gamma$ lies in the boundary of a region $R$ where $\nu<0$, item (c) still applies after some changes of sign by considering the continuation of $\widetilde\Sigma$ across $\widetilde\gamma$ by axial symmetry, so that the condition $\nu>0$ holds true in the symmetric region of $R$.

\begin{lemma}[Deformation of horizontal geodesics]\label{lem:horizontal-geodesics}
Let $\widetilde\gamma:[0,\ell]\to\widetilde\Sigma$ parametrize a horizontal geodesic with unit speed. We can write the normal of $\widetilde\Sigma$ along $\widetilde\gamma$ as
\begin{equation}\label{lem:horizontal-geodesics:eqn1}
\widetilde N_{\widetilde{\gamma}(t)}=\cos(\vartheta(t))(\widetilde\xi_{\widetilde{\gamma}(t)}\times\widetilde\gamma'(t))+\sin(\vartheta(t))\widetilde\xi_{\widetilde{\gamma}(t)},
\end{equation}
for some $\vartheta\in\mathcal C^\infty([0,\ell])$.
\begin{enumerate}[label=\emph{(\alph*)}]
    \item The conjugate curve $\gamma$ satisfies $\langle\gamma',\xi\rangle=\sin(\theta)\cos(\vartheta)$.
    \item The projection $\alpha=\pi\circ\gamma$ is a regular curve precisely at points where $\cos^2(\theta)+\nu^2\sin^2(\theta)\neq 0$, in which case $\alpha$ has geodesic curvature 
    \[\kappa_g=\frac{\left(2\tau(\cos^2(\theta)+\nu^2\sin^2(\theta))-\vartheta'\cos(\theta)\right)\sin(\theta)\cos(\vartheta)}{(\cos^2(\theta)+\nu^2\sin^2(\theta))^{3/2}}\]
    as a curve of $\mathbb{M}^2(\kappa)$ with respect to $\pi_*\frac{\xi\times\gamma'}{\|\xi\times\gamma'\|}$ as unit normal to $\alpha$.
    \item Denote by $P=\pi^{-1}(\alpha)$ the (flat) vertical cylinder over $\alpha$. At regular points of $\alpha$, the curve $\gamma$ has geodesic curvature
    \[\kappa_g^P=\frac{\vartheta'\sin(\vartheta)\sin(\theta)}{{(\cos^2(\theta)+\nu^2\sin^2(\theta))^{1/2}}}\]
    as a curve of $P$ with respect to $\frac{\gamma'\times(\xi\times\gamma')}{\|\xi\times\gamma'\|}$ as unit normal. Moreover, the cosine of the angle of the intersection $\Sigma\cap P$ along $\gamma$ equals $\frac{\cos(\theta)\cos(\vartheta)}{{(\cos^2(\theta)+\nu^2\sin^2(\theta))^{1/2}}}$.
\end{enumerate}
\end{lemma}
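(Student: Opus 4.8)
The plan is to transport the geometry of $\widetilde\gamma$ through the correspondence~\eqref{eqn:fundamental-rotation}, exactly as in the proof of Lemma~\ref{lem:vertical-geodesics}, and then to read the three curvatures off the ambient acceleration of $\gamma$. I will use that $\widetilde H=0$, that the real and imaginary parts of $\widetilde\tau+i\widetilde H=e^{-i\theta}(\tau+iH)$ give $\widetilde\tau=\tau\cos(\theta)+H\sin(\theta)$ and $H\cos(\theta)-\tau\sin(\theta)=\widetilde H=0$, and I abbreviate $q=(\cos^2(\theta)+\nu^2\sin^2(\theta))^{1/2}$. Since $\widetilde\gamma$ is horizontal, \eqref{lem:horizontal-geodesics:eqn1} gives $\widetilde\nu=\langle\widetilde N,\widetilde\xi\rangle=\sin(\vartheta)$, hence $\nu=\sin(\vartheta)$ along $\gamma$ too. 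For item (a) I expand the tangential Killing field $\widetilde T=\widetilde\xi-\widetilde\nu\widetilde N$ in the frame $\{\widetilde\gamma',\widetilde\xi\times\widetilde\gamma',\widetilde\xi\}$ and find $\widetilde T=-\cos(\vartheta)\,J\widetilde\gamma'$, so $J\widetilde T=\cos(\vartheta)\,\widetilde\gamma'$. As the correspondence is an intrinsic isometry and $\xi=T+\nu N$, this yields $\langle\gamma',\xi\rangle=\langle\gamma',T\rangle=\langle\widetilde\gamma',\Rot_\theta\widetilde T\rangle=\sin(\theta)\langle\widetilde\gamma',J\widetilde T\rangle=\sin(\theta)\cos(\vartheta)$.

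The key input for (b) and (c) is obtained by differentiating~\eqref{lem:horizontal-geodesics:eqn1} along $\widetilde\gamma$ with~\eqref{eqn:levi-civita} (using $\widetilde\nabla_X\widetilde\xi=\widetilde\tau\,X\times\widetilde\xi$ and $\widetilde\nabla_{\widetilde\gamma'}\widetilde\gamma'=0$): a short computation gives $\widetilde\nabla_{\widetilde\gamma'}\widetilde N=-(\vartheta'+\widetilde\tau)\,J\widetilde\gamma'$, whence $\widetilde A\widetilde\gamma'=(\vartheta'+\widetilde\tau)\,J\widetilde\gamma'$. Feeding this into $A=\Rot_\theta\widetilde A+H\,\id$ from~\eqref{eqn:fundamental-rotation} produces $\langle A\gamma',\gamma'\rangle=H-(\vartheta'+\widetilde\tau)\sin(\theta)$, which collapses to $-\vartheta'\sin(\theta)$ after substituting $\widetilde\tau$ and using $\widetilde H=0$. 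The decisive point is that $\widetilde\gamma$, being an ambient geodesic lying in $\widetilde\Sigma$, has vanishing geodesic curvature in $\widetilde\Sigma$; since the correspondence preserves the abstract metric, $\gamma$ is an intrinsic geodesic of $\Sigma$ as well. Hence the acceleration of the unit-speed curve $\gamma$ is purely normal, $\overline\nabla_{\gamma'}\gamma'=\langle A\gamma',\gamma'\rangle N=-\vartheta'\sin(\theta)\,N$.

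To recover the geodesic curvatures I decompose this normal vector in the plane orthogonal to $\gamma'$. Writing $\gamma'=\gamma_h'+\langle\gamma',\xi\rangle\xi$, the horizontal part has length $q$, so $|\pi_*\gamma'|=q$ and $\alpha$ is regular precisely when $q\neq0$. The component of $\xi$ orthogonal to $\gamma'$ equals $\xi-\langle\gamma',\xi\rangle\gamma'=q^2\xi-\langle\gamma',\xi\rangle\gamma_h'=q\,\nu_P$, identifying the conormal of $P=\pi^{-1}(\alpha)$ as $\nu_P=(\xi-\langle\gamma',\xi\rangle\gamma')/q$ and its normal as $\hat n=(\xi\times\gamma')/q$; from $\langle N,\xi\rangle=\nu$, $\langle N,\gamma'\rangle=0$ and $|N|=1$ I read off $\langle N,\nu_P\rangle=\nu/q$ and $\langle N,\hat n\rangle=\cos(\theta)\cos(\vartheta)/q$, the latter being exactly the intersection-angle cosine claimed in (c). For the projection I use that $P$ is the vertical cylinder over $\alpha$: computing its shape operator in the basis $\{\gamma_h'/q,\xi\}$ from $\overline\nabla_{e}e=\kappa_g\hat n$ and $\overline\nabla_X\xi=\tau\,X\times\xi$ gives $\langle\overline\nabla_{\gamma'}\gamma',\hat n\rangle=q^2\kappa_g-2\tau\langle\gamma',\xi\rangle q$. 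Equating this with $-\vartheta'\sin(\theta)\langle N,\hat n\rangle$ and solving for $\kappa_g$ yields, after the cancellation $\cos^2(\theta)(H\cos(\theta)-\tau\sin(\theta))=\cos^2(\theta)\widetilde H=0$, exactly the formula in (b). Pairing the same acceleration with $\nu_P$ gives $\kappa_g^P=-\vartheta'\sin(\theta)\,\nu/q=-\vartheta'\sin(\theta)\sin(\vartheta)/q$, the expression of (c) up to the orientation chosen for the conormal.

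The main obstacle is the submersion bookkeeping in (b): passing from the ambient acceleration of $\gamma$ to the geodesic curvature of the base curve $\alpha$ forces one through the O'Neill correction $\tau\,X\times\xi$ packaged in the shape operator of the flat cylinder $P$, and one must verify that the residual three-dimensional term cancels exactly against the minimality condition $\widetilde H=0$. The secondary difficulty is keeping the several orientation conventions coherent---the sign of $\hat n$ relative to $N$ and the handedness fixing the sign in $\kappa_g^P$---while the remaining manipulations are only lengthy trigonometric simplifications.
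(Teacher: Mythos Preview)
Your argument is correct and follows essentially the same path as the paper's: both compute $\widetilde A\widetilde\gamma'=(\vartheta'+\widetilde\tau)J\widetilde\gamma'$, transport it through~\eqref{eqn:fundamental-rotation} to obtain $\overline\nabla_{\gamma'}\gamma'=-\vartheta'\sin(\theta)\,N$ (using that $\gamma$ is an intrinsic geodesic), and then pair this acceleration with $\xi\times\gamma'$ and with $\gamma'\times(\xi\times\gamma')$ via the relation $\overline\nabla_X\xi=\tau\,X\times\xi$ to extract $\kappa_g$ and $\kappa_g^P$. The sign you obtain in~(c) actually coincides with what the paper's own computation yields from~\eqref{lem:horizontal-geodesics:eqn5} and~\eqref{lem:horizontal-geodesics:eqn8}, so your remark on the conormal orientation is apt.
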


\begin{remark}
In Lemma~\ref{lem:horizontal-geodesics}, the angle function is given by $\nu=\sin(\vartheta)$ along $\widetilde\gamma$ or $\gamma$, whence $\sin(\vartheta)=\pm\sqrt{1-\nu^2}$. This means that the angle of rotation $\vartheta$ is equivalent to the angle function except for the fact that it captures an additional sign which is fundamental in the comprehension of the orientation of sister surfaces.
\end{remark}

\begin{proof}
In order to prove item (a), we will use the fact that $\widetilde\gamma$ is horizontal and hence orthogonal to $\widetilde T$, as well as~\eqref{lem:horizontal-geodesics:eqn1}. This implies that
\begin{equation}\label{lem:horizontal-geodesics:eqn1.5}
\begin{aligned}
\langle\gamma',\xi\rangle&=\langle\gamma',T\rangle=\langle\widetilde\gamma',\cos(\theta)\widetilde T+\sin(\theta)J\widetilde T\rangle=\sin(\theta)\langle\widetilde\gamma',J\widetilde T\rangle\\
&=\sin(\theta)\langle \widetilde\gamma',\widetilde N\times\widetilde\xi\rangle=\sin(\theta)\langle\widetilde N,\widetilde\xi\times\widetilde\gamma'\rangle=\sin(\theta)\cos(\vartheta).
\end{aligned}\end{equation}

Now we will deal with item (b). Assume for a moment that $\nu^2\neq 1$, so that $\{\frac{T}{\sqrt{1-\nu^2}},\frac{JT}{\sqrt{1-\nu^2}},N\}$ is a positively oriented orthonormal frame along $\gamma$. A computation similar to~\eqref{lem:horizontal-geodesics:eqn1.5} shows that $\langle\gamma',JT\rangle=\cos(\theta)\cos(\vartheta)$, whence
\begin{equation}\label{lem:horizontal-geodesics:eqn1.6}
\xi\times\gamma'\!=\!(T\!+\!\nu N)\!\times\!\left(\tfrac{\sin(\theta)\cos(\vartheta)}{1-\nu^2}T+\tfrac{\cos(\theta)\cos(\vartheta)}{1-\nu^2}JT\right)\!=\!\cos(\theta)\cos(\vartheta)N+\nu J\gamma'.
\end{equation}
This also holds true if $\nu^2=1$, in which case $\cos(\vartheta)=0$ and $\xi\times\gamma'=\pm J\gamma'$. The vector field $\xi\times\gamma'$ is horizontal and orthogonal to $\alpha=\pi\circ\gamma$. Moreover, the projection $\alpha$ is not regular precisely at points where $\xi\times\gamma'=0$ (i.e., where $\gamma'$ is vertical). Since
\begin{equation}\label{lem:horizontal-geodesics:eqn1.75}
\|\xi\times\gamma'\|^2=\cos^2(\theta)\cos^2(\vartheta)+\nu^2=\cos^2(\theta)(1-\nu^2)+\nu^2=\cos^2(\theta)+\nu^2\sin^2(\theta),
\end{equation}
the condition for the regularity of $\alpha$ in the statement follows immediately.

Now we aim at computing the geodesic curvature of $\alpha$. Since $\widetilde\gamma$ is an ambient geodesic, in particular we have that $\langle\widetilde\nabla_{\widetilde\gamma'}\widetilde\gamma',\widetilde N\rangle=0$. This means that
\begin{equation}\label{lem:horizontal-geodesics:eqn2}
\begin{aligned}
 0&=\langle\widetilde A\widetilde\gamma',\widetilde\gamma'\rangle=\cos(\theta)\langle A\gamma'-H\gamma',\gamma'\rangle-\sin(\theta)\langle JA\gamma'-HJ\gamma',\gamma'\rangle\\
 &=\cos(\theta)\langle A\gamma',\gamma'\rangle+\sin(\theta)\langle A\gamma',J\gamma'\rangle-H\cos(\theta).
\end{aligned}\end{equation}
Using a similar argument as in~\eqref{lem:vertical-geodesics:eqn2}, we get that $\widetilde\nabla_{\widetilde\gamma'}\widetilde N=-(\widetilde\tau+\vartheta')J\widetilde\gamma'$. (Actually, it can be useful to employ the Cartan model and assume that $\widetilde\gamma$ is the $x$-axis, which implies that $\widetilde\gamma'=\widetilde E_1$, $\widetilde N=\cos(\vartheta)\widetilde E_2+\sin(\vartheta)\widetilde E_3$ and $J\widetilde\gamma'=\widetilde N\times\widetilde\gamma'=\sin(\vartheta)\widetilde E_2-\cos(\vartheta)\widetilde E_3$.) Consequently, following the idea of~\eqref{lem:vertical-geodesics:eqn3}, we reach
\begin{equation}\label{lem:horizontal-geodesics:eqn3}
\begin{aligned}
\widetilde\tau+\vartheta'&=-\langle\widetilde\nabla_{\widetilde\gamma'}\widetilde N,J\widetilde\gamma'\rangle=\langle\widetilde A\widetilde\gamma',\widetilde J\widetilde\gamma'\rangle\\
&=\cos(\theta)\langle A\gamma'-H\gamma',J\gamma'\rangle-\sin(\theta)\langle JA\gamma'-HJ\gamma',J\gamma'\rangle\\
&=\cos(\theta)\langle A\gamma',J\gamma'\rangle-\sin(\theta)\langle A\gamma',\gamma'\rangle+H\sin(\theta).
\end{aligned}
\end{equation}

We can think of~\eqref{lem:horizontal-geodesics:eqn2} and~\eqref{lem:horizontal-geodesics:eqn3} as a system of linear equations, and solve
\begin{equation}\label{lem:horizontal-geodesics:eqn4}
\langle A\gamma',\gamma'\rangle=-\vartheta'\sin(\theta),\qquad 
\langle A\gamma',J\gamma'\rangle=(\widetilde\tau+\vartheta')\cos(\theta),
\end{equation}
where we also used that $\widetilde\tau\sin(\theta)=H$. This determines the shape operator along $\gamma$. Observe that $\overline\nabla_{\gamma'}\gamma'$ has no component in $\gamma'$ because $\gamma$ has unit speed, and it has no component in $J\gamma'$ either because $\gamma$ is a geodesic in $\Sigma$ (being a geodesic is an intrinsic property). All in all, from~\eqref{lem:horizontal-geodesics:eqn4}, we get that
\begin{equation}\label{lem:horizontal-geodesics:eqn5}
\overline\nabla_{\gamma'}\gamma'=\langle\overline\nabla_{\gamma'}\gamma',N\rangle N=\langle A\gamma',\gamma'\rangle N=-\vartheta'\sin(\theta)N.
\end{equation}

Write $\gamma'=\sigma\xi+\widehat\alpha'$ as in the proof of Lemma~\ref{lem:vertical-geodesics}, though in this case the coefficient $\sigma=\langle\gamma',\xi\rangle=\sin(\theta)\cos(\vartheta)$ is not constant. As $[\widehat\alpha',\xi]=0$ and $\overline\nabla_\xi\xi=0$, we get
\begin{equation}\label{lem:horizontal-geodesics:eqn6}
\begin{aligned}
\langle\overline\nabla_{\gamma'}\gamma',\xi\times\gamma'\rangle=\sigma'\langle\xi,\xi\times\gamma'\rangle+2\sigma\langle\overline\nabla_{\widehat\alpha'}\xi,\xi\times\gamma'\rangle+\langle\overline\nabla_{\widehat\alpha'}\widehat\alpha',\xi\times\gamma'\rangle.
\end{aligned}\end{equation}
Since $\overline\nabla_{\widehat\alpha'}\xi=\tau\widehat\alpha'\times\xi=\tau\gamma'\times\xi$ and $\langle\xi,\xi\times\gamma'\rangle=0$, we can use~\eqref{lem:horizontal-geodesics:eqn1.75} and~\eqref{lem:horizontal-geodesics:eqn6} in order to solve for $\langle\overline\nabla_{\widehat\alpha'}\widehat\alpha',\xi\times\gamma'\rangle$ in~\eqref{lem:horizontal-geodesics:eqn6}. This gives
\begin{equation}\label{lem:horizontal-geodesics:eqn7}
\begin{aligned}
\langle\overline\nabla_{\widehat\alpha'}\widehat\alpha',\xi\times\gamma'\rangle&=\left(-\cos(\theta)\vartheta'+2\tau(\cos^2(\theta)+\nu^2\sin^2(\theta))\right)\sin(\theta)\cos(\vartheta).
\end{aligned}
\end{equation}
Finally, we can take into account that $\widetilde\alpha'$ is horizontal, projects to $\alpha'$ and satisfies $\|\widehat\alpha'\|^2=1-\sigma^2=\cos^2(\theta)+\nu^2\sin^2(\theta)$, whereas $\xi\times\gamma'$ is horizontal and projects to the normal of $\alpha$. Dividing~\eqref{lem:horizontal-geodesics:eqn7} by $\|\widehat\alpha'\|^2\|\xi\times\gamma'\|$ and taking into account~\eqref{lem:horizontal-geodesics:eqn1.75}, we reach the desired formula for $\kappa_g$ in item (b).

As for item (c), observe that $\xi\times\gamma'$ is normal to $P$ and $\gamma'$ is a unit tangent to $P$ and $\gamma$, so $\eta=\frac{\gamma'\times(\xi\times\gamma')}{\|\xi\times\gamma'\|}$ is an unit normal to $\gamma$ as a curve of $P$. Using~\eqref{lem:horizontal-geodesics:eqn1.6} and the fact that $\{\gamma',J\gamma',N\}$ is orthonormal and positively oriented, we get that 
\begin{equation}\label{lem:horizontal-geodesics:eqn8}
\eta=\frac{-\cos(\theta)\cos(\vartheta)J\gamma'+\nu N}{\cos^2(\theta)+\nu^2\sin^2(\theta)}.
\end{equation}
Therefore, we can easily compute $\kappa_g^P=\langle\overline\nabla_{\gamma'}{\gamma'},\eta\rangle$ using~\eqref{lem:horizontal-geodesics:eqn5}, which leads to the formula in the statement. Finally, the cosine of the angle of intersection between $\Sigma$ and $P$ along $\gamma$ is the inner product of their unit normals, i.e., $\langle N,\frac{\xi\times\gamma'}{\|\xi\times\gamma'\|}\rangle$, so the result follows from~\eqref{lem:horizontal-geodesics:eqn1.6} and~\eqref{lem:horizontal-geodesics:eqn1.75}.
\end{proof}

\subsection{Spherical helicoids}\label{sec:spherical-helicoids}
The so-called spherical helicoids were introduced by Lawson~\cite{Law} in the round sphere and generalized to the rest of Berger spheres by Torralbo~\cite{Tor12}. They are minimal surfaces ruled by horizontal geodesics. In the Cartan model, they become Euclidean helicoids invariant by a $1$-parameter group of screw-motions (also isometries in the Berger metric), and can be described locally via the following parametrization that depends on a parameter $a\in\R$:
\begin{equation}\label{eqn:spherical-helicoids}
\widetilde Y_a(u,v)=\left(\tfrac{2}{\sqrt{\widetilde\kappa}}\tan(\tfrac{\sqrt{\widetilde\kappa}}{2}u)\cos(v),\tfrac{2}{\sqrt{\widetilde\kappa}}\tan(\tfrac{\sqrt{\widetilde\kappa}}{2}u)\sin(v),av\right),\end{equation}
where the curves $\widetilde h_v(u)=\widetilde Y_a(u,v)$ are horizontal geodesics and the parameter $v$ represents the action of the screw motion. It easily follows that $(R\circ \widetilde Y_a)(u,v)=\widetilde Y_{\frac{4\widetilde\tau}{\widetilde\kappa}-a}(\tfrac{\pi}{\sqrt{\widetilde\kappa}}-u,v)$,  
for all $u,v\in\R$ with $|u|<\frac{\pi}{\sqrt{\widetilde\kappa}}$, where 
\[R(x,y,z)=\left(\frac{4x}{\widetilde\kappa(x^2+y^2)},\frac{4y}{\widetilde\kappa(x^2+y^2)},-z+\tfrac{4\widetilde\tau}{\widetilde\kappa}\arg(x+iy)\right)\]
is a local expression (that depends on the choice of a continuous argument) of the axial symmetry about the horizontal geodesic 
\begin{equation}\label{eqn:symmetry-geodesic}
\widetilde h(t)=\left(\tfrac{2}{\sqrt{\widetilde\kappa}}\cos(t\sqrt{\widetilde\kappa}),\tfrac{2}{\sqrt{\widetilde\kappa}}\sin(t\sqrt{\widetilde\kappa}),\tfrac{2\widetilde\tau}{\sqrt{\widetilde\kappa}}t\right).
\end{equation}
This means that $\widetilde\Sigma_a$ and $\widetilde\Sigma_{\frac{4\widetilde\tau}{\widetilde\kappa}-a}$ are actually the same surface and reflects the fact that each spherical helicoid has actually two vertical axes that project to antipodal points of $\mathbb{S}^2(\kappa)$ with different speeds of rotation:
\begin{itemize}
  \item Let $\widetilde\gamma_+(t)=\widetilde Y_a(0,\frac{t}{a})$ be the $z$-axis parametrized with unit speed. We have $\widetilde N_{\widetilde\gamma(t)}=\sin(\frac{1}{a}t)\widetilde E_1-\cos(\frac{1}{a}t)\widetilde E_2$ so we can choose $\vartheta_+(t)=\frac{1}{a}t-\frac{\pi}{2}$
  \item The antipodal axis $\widetilde\gamma_-(t)=\widetilde Y_{\frac{4\widetilde\tau}{\widetilde\kappa}-a}(0,\frac{-\widetilde\kappa}{4\widetilde\tau-\widetilde\kappa a}t)$ gives $\vartheta_-(t)=\frac{-\widetilde\kappa}{4\widetilde\tau-\widetilde\kappa a}t+\frac{\pi}{2}$ (we have a change of sign because $R$ maps $\xi$ to $-\xi$).
\end{itemize}
By Lemma~\ref{lem:vertical-geodesics}, we deduce that the conjugate curves $\gamma_{\pm}$ are Euclidean helices in the Cartan model and project to curves of the same constant geodesic curvature if and only if $a=\frac{2\widetilde\tau}{\widetilde\kappa}$. This means that this is the only helicoid in which we can expect to recover the $H$-tubes as sister surfaces.

\begin{remark}
The angle function in the parametrization~\eqref{eqn:spherical-helicoids} reads
\[\nu(u,v)=\tfrac{\sqrt{2\widetilde\kappa} \sin \left(\sqrt{\widetilde\kappa } u\right)}{\sqrt{2 a^2 \widetilde\kappa ^2-8 a \widetilde\kappa  \widetilde\tau +8 \widetilde\tau  (a \widetilde\kappa -2 \widetilde\tau ) \cos
   \left(\sqrt{\widetilde\kappa } u\right)+\widetilde\kappa +12 \widetilde\tau ^2-\left(\widetilde\kappa -4 \widetilde\tau ^2\right) \cos \left(2 \sqrt{\widetilde\kappa } u\right)}},\]
which takes the value $1$ if and only if $0\leq a\leq \frac{4\widetilde\tau}{\widetilde\kappa}$. The extremal cases $a=0$ and $a=\frac{4\widetilde\tau}{\widetilde\kappa}$ give the minimal spheres in $\E(\widetilde\kappa,\widetilde\tau)$, whereas $a=\frac{2\widetilde\tau}{\widetilde\kappa}$ gives the minimal torus $T_0$, see Remark~\ref{rmk:clifford}. This torus is the only spherical helicoid that contains an additional horizontal geodesic (different from the rulings $\widetilde h_v$), namely $\widetilde h$.

We cannot expect that the sister surfaces of $\widetilde Y_a$ yield $H$-tubes around a non-horizontal geodesic either. This is evidenced by the results in~\cite[\S4]{MT14} that the conjugate surfaces ($\theta=\frac\pi2$) of $\widetilde Y_a$ in $\E(4H^2+\kappa,H)$ are rotational nodoids in $\mathbb{M}^2(\kappa)\times\R$, which do not close their periods unless $a=\frac{2\widetilde\tau}{\widetilde\kappa}$.
\end{remark}

\begin{theorem}\label{thm:spherical-helicoids}
The family of horizontal $H$-tubes is preserved by the sister correspondence. Moreover, the correspondence induces local isometries between associate $H$-tubes which are not global (as explained in the caption of Figure~\ref{fig:conformal-type}).
\end{theorem}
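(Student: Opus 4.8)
The plan is to realize every horizontal $H$-tube as the sister of the single minimal tube $T_0$ living in an auxiliary Berger sphere, and then to exploit that phase angles compose. Concretely, given $\E(\kappa,\tau)$ and $H$ with $\kappa+4H^2>0$, I would set $\widetilde\kappa=\kappa+4H^2$, $\widetilde\tau=\sqrt{H^2+\tau^2}$, $\widetilde H=0$ and $\theta=\arg(\tau+iH)$. One checks at once that $\widetilde\kappa>0$ (so $\E(\widetilde\kappa,\widetilde\tau)$ is a Berger sphere), that $\widetilde\kappa-4\widetilde\tau^2=\kappa-4\tau^2$, and that $\tau+iH=e^{i\theta}(\widetilde\tau+i\widetilde H)$, since $H=\widetilde\tau\sin(\theta)$ and $\tau=\widetilde\tau\cos(\theta)$. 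These are exactly the compatibility relations of Section~\ref{sec:spherical-helicoids}, so Daniel's correspondence produces from $T_0\subset\E(\widetilde\kappa,\widetilde\tau)$ (the minimal tube of Remark~\ref{rmk:clifford}) a sister $H$-immersion $\Sigma\looparrowright\E(\kappa,\tau)$ with phase $\theta$. The whole theorem then reduces to identifying $\Sigma$ with $T_H$.

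To prove $\Sigma=T_H$ I would transfer the symmetries of $T_0$. Since the fundamental-data transformation~\eqref{eqn:fundamental-rotation} is built only from the intrinsic operators $\id$, $J$ and $\Rot_\theta$, it commutes with the pullback by any ambient isometry preserving $\xi$ and the orientation of the fibers; hence, by the uniqueness in Daniel's theorem, every such isometry of $\E(\widetilde\kappa,\widetilde\tau)$ fixing $T_0$ induces an isometry of $\E(\kappa,\tau)$ fixing $\Sigma$ and acting identically on the abstract surface. Applying this to the $1$-parameter group of screw motions generating $T_0$ and to its axial symmetries, $\Sigma$ inherits a $1$-parameter group of isometries together with axial symmetries about a vertical and a horizontal geodesic. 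Tracking the two antipodal vertical axes $\widetilde\gamma_\pm$ of $T_0$ through Lemma~\ref{lem:vertical-geodesics}—whose rotation angles $\vartheta_\pm$ are affine in $t$, as computed in Section~\ref{sec:spherical-helicoids} for $a=\tfrac{2\widetilde\tau}{\widetilde\kappa}$—shows that the induced $1$-parameter group fixes a horizontal geodesic $\Gamma$ and is of the expected screw/translation/hyperbolic-translation type according to $\operatorname{sign}(\kappa)$ (Lemma~\ref{lem:horizontal-geodesics} is available to monitor the rulings as a consistency check). Thus $\Sigma$ is a horizontal tube; since its mean curvature is $H=\widetilde\tau\sin(\theta)$, the uniqueness in Lemma~\ref{lemma:existence} forces $\Sigma=T_H$. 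The delicate point, and the main obstacle, is precisely this last identification: the conjugacy type of the induced $1$-parameter group in the isometry group changes with the sign of $\kappa$, and one must check that it genuinely fixes a \emph{horizontal} geodesic rather than an arbitrary orbit, which is exactly what the deformation formula of Lemma~\ref{lem:vertical-geodesics} for the two axes is designed to control. (Alternatively, $\Sigma=T_H$ can be verified by direct computation: the unified expressions for $r_H'$ and $h_H'$ preceding Theorem~\ref{thm:foliation} let one write the fundamental data of $T_H$ explicitly and check~\eqref{eqn:fundamental-rotation} against that of $T_0$.)

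Preservation of the family then follows from the group law of phase angles. If $T_{H_1}$ and $T_{H_2}$ are the sisters of $T_0$ with phases $\theta_1$ and $\theta_2$, composing correspondences exhibits $T_{H_2}$ as the sister of $T_{H_1}$ with phase $\theta_2-\theta_1$ (working modulo $\pi$, as noted after~\eqref{eqn:fundamental-rotation}). Hence the sister of any horizontal $H$-tube is again a horizontal tube, which is the first assertion; in particular all of them close their periods simultaneously, the closing being guaranteed a priori by the independent existence of each $T_H$ as a torus in Lemma~\ref{lemma:existence}.

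The second assertion is essentially built into the construction. The transformation~\eqref{eqn:fundamental-rotation} leaves the induced metric of $\Sigma$ unchanged, so the correspondence is a genuine isometry between the simply connected universal covers of associate tubes. It fails to descend to the compact quotients because sister tubes close up along \emph{different} lattices in $\R^2$, and therefore realize different conformal classes in general; I would record these explicit lattices and defer the precise comparison to the caption of Figure~\ref{fig:conformal-type}.
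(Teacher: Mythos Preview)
Your overall scheme coincides with the paper's: pass through the minimal torus $T_0\subset\E(\widetilde\kappa,\widetilde\tau)$, transfer its symmetries via the correspondence, and invoke the uniqueness in Lemma~\ref{lemma:existence}. Two points deserve comment.

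First, the paper resolves precisely the ``main obstacle'' you flag, but by a different and more direct route. Instead of tracking the vertical axes $\widetilde\gamma_\pm$ through Lemma~\ref{lem:vertical-geodesics} and then arguing that the induced $1$-parameter group must fix a horizontal geodesic, the paper applies Lemma~\ref{lem:horizontal-geodesics} to the distinguished horizontal geodesic $\widetilde h\subset T_0$ of~\eqref{eqn:symmetry-geodesic}, along which $\nu\equiv 1$ and hence $\cos(\vartheta)\equiv 0$. Items (a) and (b) of that lemma then give $\langle h',\xi\rangle=0$ and $\kappa_g=0$ for $\pi\circ h$, so $h$ is itself a horizontal geodesic of $\E(\kappa,\tau)$; invariance by translations along $h$ is immediate from~\cite[Prop.~1]{CMT}, and the axial symmetries transfer as you describe. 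This bypasses any discussion of the conjugacy type of the $1$-parameter group. Your approach via $\widetilde\gamma_\pm$ is not wrong, but it leaves exactly the step you single out unfinished, whereas the use of $\widetilde h$ closes it in one line.

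Second, your treatment of the ``not global'' clause is only an assertion. The paper makes it a computation: the common universal cover carries the warped metric~\eqref{thm:spherical-helicoids:eqn1}, and the sister with phase $\theta$ is the quotient by the lattice spanned by $w_1(\theta)=(\tfrac{2\pi}{\sqrt{\widetilde\kappa}},\,b_{\widetilde\kappa,\widetilde\tau}(\theta))$ and $w_2(\theta)=(0,4\pi)$. The shear $b_{\widetilde\kappa,\widetilde\tau}(\theta)$ is obtained by writing the rulings $h_v$ in the parametrization $X(\varphi,v)$ (via the orthogonality to the level sets of $\nu$) and integrating~\eqref{thm:spherical-helicoids:eqn2} to get the explicit formula~\eqref{thm:spherical-helicoids:eqn3}, which is then shown to be strictly monotone in $\theta\in[0,\tfrac{\pi}{2}]$. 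That monotonicity is what proves the lattices, and hence the global isometry types, genuinely differ. Without this computation your second paragraph remains a plausibility argument rather than a proof.
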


\begin{figure}
\begin{center}
\includegraphics[width=0.65\textwidth]{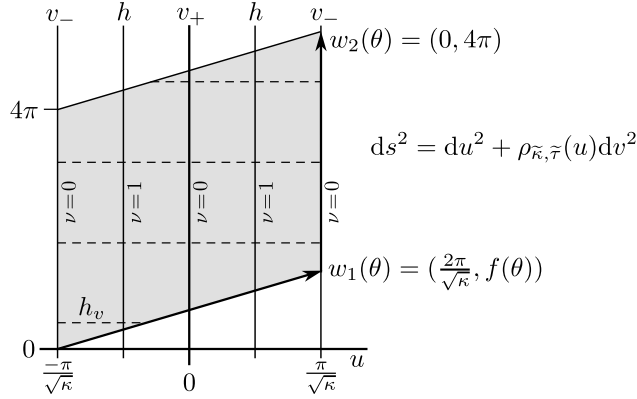}
\caption{Fundamental region for the $H$-torus as the quotient of $(\R^2,\df s^2)$ by the lattice spanned by $\{w_1(\theta),w_2(\theta)\}$ (we are assuming $\kappa>0$). The dashed horizontal lines represent a single curve $h_v$ coming from a ruling of the associate minimal helicoid. This curve spirals through the torus $T_H$, being generically dense on it.}\label{fig:conformal-type}
\end{center}
\end{figure}

\begin{proof}
Let $\widetilde\Sigma$ be the minimal torus in $\E(\widetilde\kappa,\widetilde\tau)$ locally parametrized by $\widetilde Y_{\frac{2\widetilde\tau}{\widetilde\kappa}}$, with $\widetilde\kappa,\widetilde\tau>0$. Given $\theta\in[0,\pi)$, we will consider the sister immersion $\Sigma\subset\E(\kappa,\tau)$ with phase angle $\theta$, so that $\kappa=\widetilde\kappa-4\widetilde\tau^2\sin^2(\theta)$, $\tau=\widetilde\tau\cos(\theta)$ and $H=\widetilde\tau\sin(\theta)$. Define $\gamma_+$, $\gamma_-$, $h_v$ ($v\in\R$) and $h$, as the counterpart curves in $\Sigma$ to those in $\widetilde\Sigma$. 

By Lemma~\ref{lem:horizontal-geodesics} since $\nu=1$ along $\widetilde h$, the angle of rotation satisfies $\cos(\vartheta)=0$, whence $\langle h',\xi\rangle=0$ and $h$ projects to a geodesic, so that $h$ is a horizontal geodesic, see~\cite[Prop.~3.6]{Man14}. Taking into account~\cite[Prop.~1]{CMT}, we have that $\Sigma$ is invariant by translations along $h$. Since $\widetilde\Sigma$ is invariant by axial symmetries about vertical geodesics intersecting $\widetilde h$ ($\widetilde\Sigma$ is the union of all horizontal geodesics orthogonal to $\widetilde h$), it follows that $\Sigma$ is invariant by axial symmetries about vertical geodesics intersecting $h$ (note that such axial symmetries preserve the ambient orientation and the orientation of vertical fibers, and hence the fundamental data). Likewise, $\Sigma$ is invariant by axial symmetries about horizontal geodesics intersecting $v_+$ or $v_-$ because $\widetilde\Sigma$ is invariant by axial symmetries about horizontal geodesics intersecting $\widetilde v_+$ or $\widetilde v_-$ (again, the fundamental data of these transformations agree, see~\cite[Rmk.~2]{CMT}). All in all, there is a horizontal geodesic $\Gamma$ that differs from $h$ in a vertical translation and such that $\Sigma$ is invariant by all symmetries leaving $\Gamma$ invariant. By Lemma~\ref{lemma:existence}, $\Sigma$ must be the $H$-tube around $\Gamma$ in $\E(\kappa,\tau)$ up to a vertical translation.

In the parametrization $\widetilde Y_{\frac{2\widetilde\tau}{\widetilde\kappa}}$ given by~\eqref{eqn:spherical-helicoids}, the metric of $\Sigma$ reads
\begin{equation}\label{thm:spherical-helicoids:eqn1}
\df s^2=\df u^2+\rho_{\widetilde\kappa,\widetilde\tau}(u)\df v^2,\quad \text{where }\rho_{\widetilde\kappa,\widetilde\tau}(u)=\tfrac{1}{\widetilde\kappa}\sin^2(u\sqrt{\widetilde\kappa})+\tfrac{4\widetilde\tau^2}{\widetilde\kappa^2}\cos^2(u\sqrt{\widetilde\kappa}).
\end{equation}
This actually defines a metric in all $\R^2$, where the coordinate curves with constant $v$ represent the horizontal rulings $\widetilde h_{v}$. Since each $\widetilde h_{v}$ projects two-to-one onto a great circle of $\mathbb{S}^2(\kappa)$, it easily follows that $\widetilde h_{v}$ intersects each orbit of the $1$-parameter group of isometries twice. This implies that $\widetilde\Sigma$ is globally isometric to the quotient of $(\R^2,\df s^2)$ by the lattice spanned by $w_1(0)=(\frac{2\pi}{\sqrt{\widetilde\kappa}},2\pi)$ and $w_2(0)=(0,4\pi)$.

The sister surface $\Sigma$ in $\E(\kappa,\tau)$ with phase angle $\theta$ has the same universal cover $(\R^2,\df s^2)$, the same parameter $v$ representing the isometric invariance (see~\cite[Prop.~1]{CMT}) and the same periodicity in the variable $u$. If $\kappa=\widetilde\kappa-4\widetilde\tau^2\sin^2(\theta)>0$, then $\Sigma$ is globally isometric to the quotient of $(\R^2,\df s^2)$ by the lattice spanned by $w_1(\theta)=(\frac{2\pi}{\sqrt{\widetilde\kappa}},b_{\widetilde\kappa,\widetilde\tau}(\theta))$ and $w_2(\theta)=(0,4\pi)$ for some function $b_{\widetilde\kappa,\widetilde\tau}(\theta)$ with $b_{\widetilde\kappa,\widetilde\tau}(0)=2\pi$, see Figure~\ref{fig:conformal-type}. If $\kappa\leq 0$, then $\Sigma$ is a cylinder and the periodicity is given just by $w_1(\theta)$.

We will obtain an expression for $b_{\widetilde\kappa,\widetilde\tau}(\theta)$ by means of $X(\varphi,v)$ given by~\eqref{eqn:X:berger},~\eqref{eqn:X:nil} or~\eqref{eqn:X:SL} after the substitutions~\eqref{eqn:parametrization:berger},~\eqref{eqn:parametrization:Nil} and~\eqref{eqn:parametrization:SL}. Observe that the angle function is a function of $\varphi$ and the curves $h_v$ are orthogonal to the level curves of the angle function in view of~\eqref{thm:spherical-helicoids:eqn1} (this condition is intrinsic). Write $h_v(\varphi)=X(\varphi,v(\varphi))$ with $\varphi\in\R$, whence the orthogonality condition reads $\langle X_\varphi+v'X_v,X_v\rangle=0$. Regardless the sign of $\kappa$, this last equation can be expressed as
\begin{equation}\label{thm:spherical-helicoids:eqn2}
\begin{aligned}
v'(\varphi)&=\frac{-\langle X_\varphi,X_v\rangle}{\langle X_v,X_v\rangle}=\frac{\tau\sqrt{\kappa}\cos^2(\varphi)}{\sqrt{4H^2+\kappa\cos^2(\varphi)}\sqrt{H^2+\tau^2\cos^2(\varphi)}}
\\&=\frac{\cos(\theta)\sqrt{\widetilde\kappa-4\widetilde\tau^2\sin^2(\theta)}\cos^2(\varphi)}{\sqrt{\widetilde\kappa\cos^2(\varphi)+4\widetilde\tau^2\sin^2(\theta)\sin^2(\varphi)}\sqrt{\cos^2(\varphi)+\sin^2(\theta)\sin^2(\varphi)}}.
\end{aligned}\end{equation}
Observe that $\Sigma$ is recovered in the parametrization $X(\varphi,v)$ as the quotient of $\R^2$ by the (rectangular) lattice spanned by $(2\pi,0)$ (and $(0,4\pi)$ if $\kappa>0$). By identifying the curves $h_v$ in the parametrizations $X$ and $\widetilde Y_{\frac{2\widetilde\tau}{\widetilde\kappa}}$ (recall that they have constant $v$ in the latter case), we get $b_{\widetilde\kappa,\widetilde\tau}(\theta)=v(2\pi)-v(0)$. From~\eqref{thm:spherical-helicoids:eqn2}, we deduce that
\begin{equation}\label{thm:spherical-helicoids:eqn3}
b_{\widetilde\kappa,\widetilde\tau}(\theta)=\int_0^{2\pi}\!\!\!\tfrac{\cos(\theta)\sqrt{\widetilde\kappa-4\widetilde\tau^2\sin^2(\theta)}\cos^2(\varphi)}{\sqrt{\widetilde\kappa\cos^2(\varphi)+4\widetilde\tau^2\sin^2(\theta)\sin^2(\varphi)}\sqrt{\cos^2(\varphi)+\sin^2(\theta)\sin^2(\varphi)}}\df\varphi.
\end{equation}
Although this integral seems to be nonexplicit, the integrand is clearly a strictly decreasing function of $\theta$ in the range $\theta\in[0,\frac\pi2]$ with $b_{\widetilde\kappa,\widetilde\tau}(\frac\pi2)=0$. Moreover, we have the symmetry $b_{\widetilde\kappa,\widetilde\tau}(\pi-\theta)=-b_{\widetilde\kappa,\widetilde\tau}(\theta)$ for all $\theta$, so we get that $\theta\mapsto b_{\widetilde\kappa,\widetilde\tau}(\theta)$ decreases from $2\pi$ to $-2\pi$ as $\theta$ runs from $0$ to $\pi$.
\end{proof}

We have shown that all $H$-tubes admit a warped-product metric given by~\eqref{thm:spherical-helicoids:eqn1}. This metric can be easily transformed into a conformal metric by a change of coordinates $u=g(s)$. The conformal condition is equivalent to the following \textsc{ode}:
\begin{equation}\label{eqn:conformal-factor}
g'(s)^2=\rho_{\widetilde\kappa,\widetilde\tau}(g(s))=\frac{\widetilde\kappa\sin^2(g(s)\sqrt{\widetilde\kappa})+4\widetilde\tau^2\cos^2(g(s)\sqrt{\widetilde\kappa})}{\widetilde\kappa^2},\quad g(0)=0.
\end{equation}
The right-hand side stays always between two positive constants, so the solution to~\eqref{eqn:conformal-factor} is a strictly increasing diffeomorphism from $\R$ to $\R$. There is a unique $a_{\widetilde\kappa,\widetilde\tau}\in\R$ such that $g(a_{\widetilde\kappa,\widetilde\tau})=\frac{2\pi}{\sqrt{\widetilde\kappa}}$, whence we deduce from~\eqref{eqn:conformal-factor} that
\begin{equation}\label{eqn:lattice-first}
g(s+a_{\widetilde\kappa,\widetilde\tau})=g(s)+\tfrac{2\pi}{\sqrt{\widetilde\kappa}},\qquad g'(s+a_{\widetilde\kappa,\widetilde\tau})=g'(s),\qquad\text{for all }s\in\R.
\end{equation}
Since the change $u=g(s)$ agrees with the periodicity of the tangent function in~\eqref{eqn:spherical-helicoids}, the conformal metric $g'(s)^2(\df s^2+\df v^2)$ is well defined in the quotient.

\begin{corollary}\label{coro:conformal}
The sister $H$-torus of the minimal torus in $\E(\widetilde\kappa,\widetilde\tau)$ with phase $\theta\in\R$ (and $\kappa>0$) is conformally the quotient of $\mathbb{R}^2$ by the lattice spanned by $(a_{\widetilde\kappa,\widetilde\tau},b_{\widetilde\kappa,\widetilde\tau}(\theta))$ and $(0,4\pi)$, where $a_{\widetilde\kappa,\widetilde\tau}$ and $b_{\widetilde\kappa,\widetilde\tau}(\theta)$ are defined by~\eqref{thm:spherical-helicoids:eqn3} and~\eqref{eqn:lattice-first}.
\end{corollary}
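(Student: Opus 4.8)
The plan is to transport the lattice description of $\Sigma$ obtained in Theorem~\ref{thm:spherical-helicoids} from the $(u,v)$-coordinates into the conformal $(s,v)$-coordinates introduced through the change $u=g(s)$ of~\eqref{eqn:conformal-factor}. Recall that Theorem~\ref{thm:spherical-helicoids} realizes $\Sigma$ (for $\kappa>0$) as the quotient of $(\R^2,\df s^2)$, with $\df s^2=\df u^2+\rho_{\widetilde\kappa,\widetilde\tau}(u)\df v^2$ as in~\eqref{thm:spherical-helicoids:eqn1}, by the lattice generated by the translations $\sigma_1(u,v)=(u+\tfrac{2\pi}{\sqrt{\widetilde\kappa}},v+b_{\widetilde\kappa,\widetilde\tau}(\theta))$ and $\sigma_2(u,v)=(u,v+4\pi)$. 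After the substitution $u=g(s)$ the metric becomes $g'(s)^2(\df s^2+\df v^2)$, which is pointwise conformal to the flat metric on $\R^2$; hence the conformal type of $\Sigma$ is dictated entirely by the deck group expressed in the $(s,v)$-plane.

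First I would conjugate the two generators by the diffeomorphism $\Psi(s,v)=(g(s),v)$. Since $\sigma_2$ acts only in $v$, conjugating it by $\Psi$ yields the $(s,v)$-translation by $(0,4\pi)$. For $\sigma_1$, the $u$-shift of size $\tfrac{2\pi}{\sqrt{\widetilde\kappa}}$ translates, via the defining relation $g(s+a_{\widetilde\kappa,\widetilde\tau})=g(s)+\tfrac{2\pi}{\sqrt{\widetilde\kappa}}$ in~\eqref{eqn:lattice-first}, into the constant $s$-shift $a_{\widetilde\kappa,\widetilde\tau}$, while the $v$-shift $b_{\widetilde\kappa,\widetilde\tau}(\theta)$ is unchanged; thus $\Psi^{-1}\circ\sigma_1\circ\Psi$ is the translation by $(a_{\widetilde\kappa,\widetilde\tau},b_{\widetilde\kappa,\widetilde\tau}(\theta))$. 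This already identifies the deck group in the $(s,v)$-plane with the lattice spanned by $(a_{\widetilde\kappa,\widetilde\tau},b_{\widetilde\kappa,\widetilde\tau}(\theta))$ and $(0,4\pi)$, as claimed.

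It then remains only to confirm that the conformal metric $g'(s)^2(\df s^2+\df v^2)$ genuinely descends to this quotient, which is the single step that needs care. The conformal factor $g'(s)^2$ must be invariant under both generators: invariance under $(0,4\pi)$ is immediate, while invariance under $(a_{\widetilde\kappa,\widetilde\tau},b_{\widetilde\kappa,\widetilde\tau}(\theta))$ is exactly the periodicity $g'(s+a_{\widetilde\kappa,\widetilde\tau})=g'(s)$ recorded in~\eqref{eqn:lattice-first}---this is precisely the content of the observation preceding the statement, that $g'(s)^2(\df s^2+\df v^2)$ is well defined on the quotient. I expect the only genuine obstacle to be the bookkeeping of matching the $u$-translation $\tfrac{2\pi}{\sqrt{\widetilde\kappa}}$ with the constant $s$-shift $a_{\widetilde\kappa,\widetilde\tau}$; once~\eqref{eqn:lattice-first} is invoked the result is immediate, with $b_{\widetilde\kappa,\widetilde\tau}(\theta)$ and $a_{\widetilde\kappa,\widetilde\tau}$ given respectively by the integral~\eqref{thm:spherical-helicoids:eqn3} and by~\eqref{eqn:lattice-first}.
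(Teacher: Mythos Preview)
Your proposal is correct and follows precisely the approach implicit in the paper: the paper gives no separate proof of the corollary, but the paragraph preceding it sets up exactly the change of variables $u=g(s)$ and records the periodicity~\eqref{eqn:lattice-first}, from which the lattice in the $(s,v)$-plane is read off just as you do. Your write-up simply makes explicit the conjugation of the deck transformations by $\Psi(s,v)=(g(s),v)$ and the check that the conformal factor descends, which is what the paper's remark ``the conformal metric $g'(s)^2(\df s^2+\df v^2)$ is well defined in the quotient'' is summarizing.
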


\begin{remark}\label{rmk:conformal}
The proposed lattice can be normalized by a similarity to $(1,0)$ and $(\frac{1}{2\pi}b_{\widetilde\kappa,\widetilde\tau}(\theta),\frac{1}{2\pi}a_{\widetilde\kappa,\widetilde\tau})$. In the proof of Theorem~\ref{thm:spherical-helicoids}, we have seen that $\theta\mapsto\frac{1}{2\pi}b_{\widetilde\kappa,\widetilde\tau}(\theta)$ ranges from $\frac{-1}{2}$ to $\frac{1}{2}$ for $\theta\in\R$ and fixed $\widetilde\kappa$ and $\widetilde\tau$. On the other hand, Equation~\eqref{eqn:conformal-factor} implies that $a_{\widetilde\kappa,\widetilde\tau}$ is arbitrarily large (resp.\ close to zero) if both $\frac{1}{\widetilde\kappa}$ and $\frac{4\widetilde\tau^2}{\widetilde\kappa^2}$ are close to zero (resp.\ are large enough). This means that $a_{\widetilde\kappa,\widetilde\tau}$ takes all positive real values.

By looking at the moduli space of conformal tori (e.g., see~\cite[\S3]{Pinkall}), we deduce that all compact genus-one Riemann surfaces can be immersed as an horizontal $H$-tube in some $\E(\kappa,\tau)$.
\end{remark}

\begin{remark}
The function $g$ in~\eqref{eqn:conformal-factor} can be integrated explicitly in terms of elliptic functions. More precisely, the software \emph{Mathematica} gives 
\[g(s)=\tfrac{1}{\sqrt{\kappa}}\am(\tfrac{2\tau}{\sqrt{\kappa}}s,1-\tfrac{\kappa}{4\tau^2}),\]
where $\am(x,m)$ denotes the Jacobi amplitude of modulus $m$. For further reference, this function can be looked up at \href{https://mathworld.wolfram.com/JacobiAmplitude.html}{https://mathworld.wolfram.com}.
\end{remark}

\subsection{The isoperimetric profile of $H$-tubes}
Torralbo and Urbano proved that in some ranges of $\kappa,\tau>0$ the least area surface spanning a given volume in the Berger sphere $\E(\kappa,\tau)$ is either an $H$-sphere or an embedded $H$-torus~\cite{TU}. We will show numerically that the $H$-tubes can be discarded in this competition since, for a fixed volume, they have larger area than $H$-spheres if $\kappa-4\tau^2>0$ and have larger area than Hopf $H$-tori is $\kappa-4\tau^2<0$. We will fix $\kappa=4$ henceforth after a suitable rescaling, so formulas for the area and volume become more tractable.

On the one hand, the area of the $H$-tube $T_H$ in $\E(\kappa,\tau)$ can be obtained as the integral of the area element of the parametrization~\eqref{eqn:parametrization:berger}, which gives
\[\Area(T_H)=H\pi\int_0^{2\pi}\frac{\sqrt{H^2+\tau^2\cos^2(u)}}{(H^2+\cos^2(u))^{3/2}}\df u.\]
On the other hand, we can take advantage of the foliation properties given by Theorem~\ref{thm:foliation} because we can consider $H$ as an additional parameter to parametrize the interior of $T_H$. Even in the event that there is no such foliation (i.e., if $(1-x_0^2)\kappa-4\tau^2>0$), the algebraic volume still gives the desired value. We can take the volume element of the parametrization~\eqref{eqn:parametrization:berger} with $w=H$ as third parameter to get (after a really cumbersome calculation) that
\begin{align*}
\Vol(T_H)&=\tfrac{\pi}{2}\int_H^{+\infty}\int_0^{2\pi}\frac{w\sqrt{w^2+\tau^2\cos^2(u)}}{(1+w^2)(w^2+\cos^2(u))^{3/2}}\,\df u\,\df w\\
&\quad+\tfrac{\pi\sqrt{1-\tau^2}}{2}\int_H^{+\infty}\int_0^{2\pi}\frac{w^2\arctanh(\frac{w\sin(u)}{\sqrt{1+w^2}\sqrt{w^2+\tau^2\cos^2(u)}})\sin(u)}{(1+w^2)^{3/2}(w^2+\cos^2(u))^{3/2}}\,\df u\,\df w.
\end{align*}
However, since the ambient Berger sphere is compact, one has to take into account the complementary region, and consider $\Vol(\E(\kappa,\tau))-\Vol(T_H)=\frac{32\tau\pi^2}{\kappa^2}-\Vol(T_H)$ also as a volume demarcated by $T_H$. It seems that $H\mapsto\Area(T_H)$ and $H\mapsto\Vol(T_H)$ cannot be inverted by elementary methods, so we cannot compare directly the $H$-tubes and the other candidates to solve the isoperimetric problem. In spite of this technical inconvenience, we have plotted numerically in Figure~\ref{fig:profiles} the curve $H\mapsto (\Vol(T_H),\Area(T_H))$, as well as the corresponding curves for $H$-spheres and Hopf $H$-tori using the formulas given by Torralbo~\cite{Tor10}. Recall that in the case of the round sphere ($\tau=1$), the red and green curves coincide because the Hopf $H$-tori and the $H$-tubes are congruent.

\begin{figure}
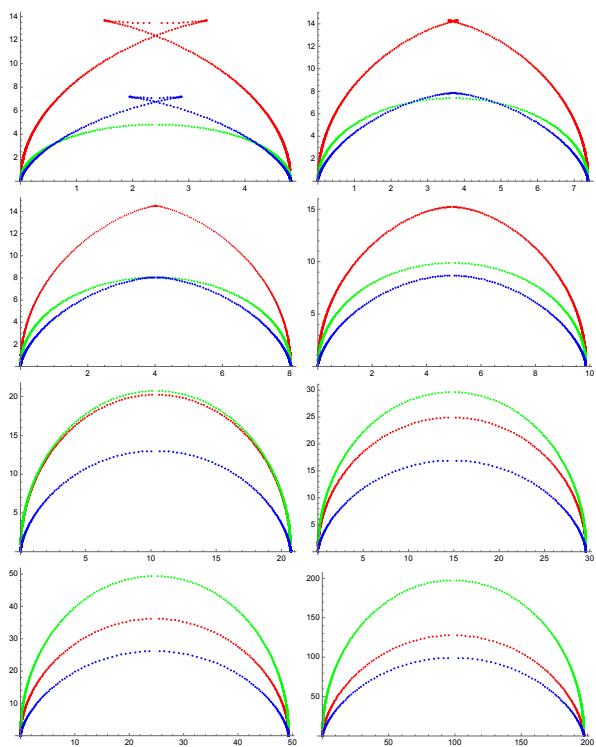

\includegraphics[width=0.3\linewidth]{fig-profile1.pdf}
\includegraphics[width=0.3\linewidth]{fig-profile2.pdf}\\
\includegraphics[width=0.3\linewidth]{fig-profile3.pdf}
\includegraphics[width=0.3\linewidth]{fig-profile4.pdf}\\
\includegraphics[width=0.3\linewidth]{fig-profile5.pdf}
\includegraphics[width=0.3\linewidth]{fig-profile6.pdf}\\
\includegraphics[width=0.3\linewidth]{fig-profile7.pdf}
\includegraphics[width=0.3\linewidth]{fig-profile8.pdf}
\caption{Numerical plot of the isoperimetric profiles of $H$-spheres (blue), Hopf $H$-tori (green) and horizontal $H$-tubes (red) in Berger spheres $\E(4,\tau)$ where $H$ runs from $0.025$ to $20$ with step $0.025$ and $\tau\in\{0.244,0.374,0.407,0.5,1.05,1.5,2.5,10\}$. The horizontal axis is the volume enclosed by the surface and the vertical axis represents its area.}\label{fig:profiles}
\end{figure}

% ------------------------
% BIBLIOGRAPHY
% ------------------------


\begin{thebibliography}{00}

\bibitem{CMT} J.\ Castro-Infantes, J.\ M.\ Manzano, F.\ Torralbo.
\newblock Conjugate Plateau constructions in product spaces.
\newblock To appear as a book chapter in the Springer-RSME series.  Preprint available at \href{https://arxiv.org/abs/2203.13162}{arXiv:2203.13162}.

\bibitem{Dan} B.\ Daniel.
\newblock Isometric immersions into 3-dimensional homogeneous manifolds.
\newblock \emph{Comment.\ Math.\ Helv.} \textbf{82} (2007), no.\ 1, 87--131.

\bibitem{DH09} B.\ Daniel, L.\ Hauswirth.
\newblock Half-space theorem, embedded minimal annuli and minimal graphs in the Heisenberg group.
\newblock \emph{Proc.\ London Math.\ Soc.} \textbf{98} (2009), no.\ 3, 445--470.

\bibitem{FMP} C.\ Figueroa, F.\ Mercuri, R.\ H.\ L.\ Pedrosa.
\newblock Invariant surfaces of the Heisenberg groups.
\newblock \emph{Ann.\ Mat.\ Pura Appl.}, \textbf{177} (1999), no.\ 4, 173--194. 

\bibitem{Kase} P.\ Käse.
\newblock Screw motion surfaces of constant mean curvature in homogeneous 3-manifolds.
\newblock Preprint available at \href{https://arxiv.org/abs/2203.10968}{arXiv:2203.10968}.

\bibitem{Law} H.\ B.\ Lawson.
\newblock Complete Minimal Surfaces in $S^3$.
\newblock \emph{Ann. of Math. (2)}, \textbf{92} (1970), no.\ 3, 335--374.

\bibitem{Man13} J.\ M.\ Manzano.
\newblock Estimates for constant mean curvature graphs in $M\times\mathbb{R}$.
\newblock \emph{Rev.\ Mat.\ Iberoam.} \textbf{29} (2013), no.\ 4, 1263--1281.

\bibitem{Man14} J.\ M.\ Manzano.
\newblock On the classification of Killing submersions and their isometries.
\newblock \emph{Pac.\ J.\ Math.} \textbf{270} (2014), no.\ 2, 367--692.

\bibitem{MT14} J.\ M.\ Manzano, F.\ Torralbo.
\newblock New examples of constant mean curvature surfaces in $\mathbb{S}^2\times\mathbb{R}$ and $\mathbb{H}^2\times\mathbb{R}$.
\newblock \emph{Michigan Math.\ J.} \textbf{63} (2014), no.\ 4, 701--723.

\bibitem{MT22} J.\ M.\ Manzano, F.\ Torralbo.
\newblock Horizontal Delaunay surfaces with constant mean curvature in $\mathbb{S}^2\times\mathbb{R}$ and $\mathbb{H}^2\times\mathbb{R}$.
\newblock \emph{Cambridge J. Math.} \textbf{10} (2022), no.\ 3, 657--688.

\bibitem{Mazet2013}
L.\ Mazet. 
\newblock A general halfspace theorem for constant mean curvature surfaces.
\newblock \emph{Amer.\ J.\ Math.} \textbf{125} (2013), no.~3, 801--834. 

\bibitem{HM90}
D.\ Hoffman, W.H. Meeks.
\newblock The strong halfspace theorem for minimal surfaces.
\newblock \emph{Invent.\ Math.} \textbf{101} (1990), 373--377.




\bibitem{Onnis}
I.\ I.\ Onnis.
\newblock Invariant surfaces with constant mean curvature in
  {$\mathbb{H}^2\times \mathbb{R}$}.
\newblock \emph{Ann. Mat. Pura Appl.}, \textbf{187} (2008), no.\ 4, 667--682.

\bibitem{Pena} C.\ Peñafiel.
\newblock Invariant surfaces in $\widetilde{\mathrm{PSL}}_2(\R,\tau)$ and applications.
\newblock \emph{Bull.\ Braz.\ Math.\ Soc.}, \textbf{43} (2012), no.\ 4, 545--578.

\bibitem{Pedrosa}R.\ Pedrosa.
\newblock The isoperimetric problem in spherical cylinders.
\newblock \emph{Ann. Glob. Anal. Geom.}, \textbf{26} (2004), no.\ 4, 333--354.

\bibitem{Pinkall}U.\ Pinkall.
\newblock Hopf tori in $S^3$. 
\newblock \emph{Invent.\ Math.} \textbf{81} (1985), 379--386.

\bibitem{Ple14} J.\ Plehnert.
\newblock Constant mean curvature $k$-noids in homogeneous manifolds.
\newblock \emph{Illinois J.\ Math.} \textbf{58} (2014), no.\ 1, 233--249.

\bibitem{Tor10} F.\ Torralbo. 
\newblock Rotationally invariant constant mean curvature surfaces oin homogeneous $3$-manifolds.
\newblock \emph{Differential Geom.\ Appl.} \textbf{28} (2010), 593--607.

\bibitem{Tor12} F.\ Torralbo. 
\newblock Compact minimal surfaces in the Berger spheres.
\newblock \emph{Ann.\ Global Anal.\ Geom.} \textbf{41} (2012), 391--405.

\bibitem{TU} F.\ Torralbo, F.\ Urbano. 
\newblock Compact stable constant mean curvature surfaces in homogeneous 3-manifolds.
\newblock \emph{Indiana Univ.\ Math.\ J.} \textbf{61} (2012), no.\ 3, 1129--1156.

\bibitem{Vrzina} M.\ Vržina.
\newblock Cylinders as left invariant CMC surfaces in $\mathrm{Sol}_3$ and $E(\kappa,\tau)$-spaces diffeomorphic to $\mathbb{R}^3$.
\newblock \emph{Diff. Geom. Appl.} \textbf{58} (2018), 141--176.

\end{thebibliography}
\end{document}